\theoremstyle{plain}
\newtheorem{theorem}{Theorem}[section]
\newtheorem{coro}[theorem]{Corollary}
\newtheorem{lem}[theorem]{Lemma}
\newtheorem{fact}[theorem]{Fact}
\newtheorem{conj}[theorem]{Conjecture}
\theoremstyle{definition}
\newtheorem{other}{}
\title{The signless Laplacian spectral Tur\'{a}n problems for color-critical graphs}
\author{
Jian Zheng\thanks{
School of  Mathematics and Statistics, Jiangxi Normal University, Jiangxi, China. 
Email: \url{zhengj@jxnu.edu.cn}. } 
 \and 
 Yongtao Li\thanks{School of Mathematics and Statistics, Central South University, Changsha, China. Email: \url{ytli0921@hnu.edu.cn}. 
Supported by the Postdoctoral Fellowship Program of CPSF (No. GZC20233196).}
 \and 
 Honghai Li\thanks{Corresponding author, 
 School of  Mathematics and Statistics, Jiangxi Normal University, Jiangxi, China. 
Email: \url{lhh@jxnu.edu.cn}.  
Supported by National Natural Science Foundation of China (No. 12161047) and  Jiangxi Provincial Natural Science foundation (No. 20224BCD41001).} 
}
\date{\today}
\begin{document}
\maketitle
\begin{abstract}
The well-known Tur\'{a}n theorem states that if $G$ is an $n$-vertex $K_{r+1}$-free graph, then $e(G)\le e(T_{n,r})$, with equality if and only if $G$ is the $r$-partite Tur\'{a}n graph $T_{n,r}$. 
A graph $F$ is called color-critical if it contains an edge whose deletion reduces its chromatic number. 
Extending the Tur\'{a}n theorem, Simonovits (1968) proved that for any color-critical graph $F$ with $\chi (F)=r+1$ and sufficiently large $n$, the Tur\'{a}n graph $T_{n,r}$ is the unique graph with maximum number of edges among all $n$-vertex $F$-free graphs. 
Subsequently, Nikiforov [Electron. J. Combin., 16 (1) (2009)] proved a spectral version of the Simonovits theorem in terms of the adjacency spectral radius. 
In this paper, we show an extension of the Simonovits theorem for the signless Laplacian spectral radius.  We prove that for any color-critical graph $F$ with $\chi (F)=r+1\ge 4$ and sufficiently large $n$, if $G$ is an $F$-free graph on $n$ vertices, then $q(G)\le q(T_{n,r})$, with equality if and only if $G=T_{n,r}$.  
Our approach is to establish a signless Laplacian spectral version of the criterion of Keevash, Lenz and Mubayi [SIAM J. Discrete Math., 28 (4) (2014)]. 
Consequently, we can determine the signless Laplacian spectral extremal graphs for generalized books and even wheels.  
As an application, our result gives an upper bound on the degree power of an $F$-free graph. 
We show that if $n$ is sufficiently large and $G$ is an $F$-free graph on $n$ vertices with $m$ edges, then $\sum_{v\in V(G)} d^2(v) \le 2(1- \frac{1}{r})mn$, with equality if and only if $G$ is a regular Tur\'{a}n graph $T_{n,r}$.  
This extends a result of Nikiforov and Rousseau [J. Combin. Theory Ser B 92 (2004)].  
\end{abstract}

 {\bf MSC classification}\,: 15A42, 05C50
 
 {\bf Keywords}\,: Extremal graph theory; 
 Tur\'{a}n theorem; 
  Color-critical graphs; 
 Signless Laplacian spectral radius;  
 Degree powers.

\section{Introduction}

A graph $G=(V,E)$ consists of a vertex set $V=\{v_1,v_2,\ldots,v_n\}$  and an edge set $E=\{e_1,e_2,\ldots,e_m\}$, where each edge $e_i$ is a $2$-element subset of $V$.
 We write $|G|:=|V|=n$ and $e(G):= |E|=m$ for the number of vertices and edges in $G$, respectively. 
 For a graph $G$ and a vertex $u\in V(G)$, the  {degree} $d_{G}(u)$ of $u$ is the number of edges in $G$ containing $u$. 
 We write $N (u) $ for the set of neighbors of $u$. 
 Let $\delta(G)$ be the {minimum degree} of $G$. Let $G-u$ be the subgraph obtained from $G$ by removing $u$ from $V(G)$ and removing all edges containing $u$ from $E(G)$. 
 Let $K_{r+1}$ be the complete graph on $r+1$ vertices, and $K_{s,t}$ be the complete bipartite graph with partite sets of sizes $s$ and $t$. 
 For two vertex-disjoint graphs $G$ and $H$, 
 the join $G\vee H$ is obtained from the union $G\cup H$ by joining each vertex of $G$ to each vertex of $H$. 
A {\it proper vertex coloring} of a graph $F$ is an  assignment of colors to the vertices so that the adjacent vertices get different colors. The {\it chromatic number} of $F$, denoted by $\chi(F)$, is the minimum number of colors of a proper vertex coloring of $F$.

\subsection{The classical extremal graph results} 

Let $G$ and $F$ be two simple graphs.  We say that $G$ is {\it $F$-free} if $G$ does not contain a subgraph isomorphic to $F$. Clearly, every bipartite graph is triangle-free. 
A classic problem in extremal graph theory is the Tur\'{a}n-type problem, 
which asks for the maximum number of edges in an $F$-free graph of order $n$.  
The investigation of Tur\'{a}n-type problems can be traced back to Mantel's seminal theorem in 1907, which established the foundational result in extremal graph theory; see \cite{Bollobas78}.  
Mantel's theorem states that every triangle-free graph $G$ of order $n$ contains at most $\lfloor {n^2}/{4}\rfloor$ edges,  
and the maximum is achieved if and only if $G$ is a balanced complete bipartite graph.   
Extending Mantel's theorem, 
Tur\'{a}n \cite{Turan41} proved that if $G$ is an $n$-vertex $K_{r+1}$-free graph, then 
\begin{equation} \label{eq-weak}
  e(G)\le \left( 1- \frac{1}{r}\right) \frac{n^2}{2}. 
  \end{equation} 
 Let $T_{n,r}$ be the $n$-vertex $r$-partite {\it Tur\'{a}n graph}, which is the complete $r$-partite graph on $n$ vertices whose partite sets are as balanced as possible. 
 Clearly, we can see that $T_{n,r}$ is $K_{r+1}$-free and 
 $e(T_{n,r}) \le (1- \frac{1}{r}) \frac{n^2}{2}$. 
Moreover, it is easy to check that if $r$ divides $n$, then $e(T_{n,r}) = {r \choose 2} (\frac{n}{r})^2 = (1-\frac{1}{r})\frac{n^2}{2}$. 
 In fact, Tur\'{a}n \cite{Turan41} proved a  slightly stronger version than (\ref{eq-weak}).

\begin{theorem}[Tur\'{a}n \cite{Turan41}]  \label{thm-Turan}
If  $G$ is a $K_{r+1}$-free graph on $n$ vertices, 
then 
\[  e(G)\le e(T_{n,r}), \] 
where the equality holds if and only if $G=T_{n,r}$.
\end{theorem}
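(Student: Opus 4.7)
The plan is to prove the theorem by Zykov's symmetrization, which reduces matters to showing that among complete $r$-partite graphs on $n$ vertices, the Tur\'{a}n graph $T_{n,r}$ maximizes the edge count. Starting from an arbitrary $K_{r+1}$-free graph $G$, I would iteratively transform $G$ into a complete multipartite graph without decreasing $e(G)$, and then apply a balancing argument to conclude.

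For the symmetrization step, suppose $u$ and $v$ are non-adjacent vertices with $d(u)\ge d(v)$. I would delete $v$ and add a new vertex $v'$ with $N(v')=N(u)$. The new graph $G'$ remains $K_{r+1}$-free, since any copy of $K_{r+1}$ using $v'$ would translate into one using $u$; moreover $e(G')-e(G)=d(u)-d(v)\ge 0$. Now call two vertices $x,y$ equivalent if $x=y$ or they are non-adjacent and have the same degree. I would argue that if the relation ``non-adjacency'' is not already transitive, then one can symmetrize non-adjacent pairs to strictly increase the edge count or, when degrees coincide, preserve it while increasing the number of equivalent pairs. Iterating, one terminates at a graph $G^*$ on $n$ vertices in which non-adjacency is an equivalence relation, so $G^*$ is complete multipartite; being $K_{r+1}$-free, it has at most $r$ parts.

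Next I would verify that among complete $r$-partite graphs on $n$ vertices, $T_{n,r}$ uniquely maximizes the edge count. If some two part sizes $n_i$ and $n_j$ satisfy $n_i\ge n_j+2$, moving one vertex from the larger to the smaller part strictly increases the edge count by $n_i-n_j-1\ge 1$. Hence in an extremal configuration all part sizes differ by at most $1$, which characterizes $T_{n,r}$. Combining this with the symmetrization bound yields $e(G)\le e(G^*)\le e(T_{n,r})$.

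The main obstacle is the equality case, since both the symmetrization step and the merging of parts can be edge-preserving in degenerate situations. To handle this I would show that if $e(G)=e(T_{n,r})$, then no symmetrization move strictly increased $e$, forcing $d(u)=d(v)$ at every step; combined with the transitivity conclusion and the uniqueness of the balanced partition among $r$-partite graphs, this forces $G\cong T_{n,r}$. A careful bookkeeping of which pairs have been symmetrized, so that no previously-created equivalent pair is destroyed, is where the argument becomes delicate.
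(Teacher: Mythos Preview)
The paper does not prove Theorem~\ref{thm-Turan}; it is stated with a citation to Tur\'{a}n and a pointer to \cite{Bollobas78} for various proofs, so there is no in-paper argument to compare against.

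Your outline via Zykov symmetrization is one of the standard proofs, and the overall strategy is sound. The weak point is the termination/equality bookkeeping: the secondary progress measure you propose (the number of non-adjacent same-degree pairs) is not monotone under cloning, because replacing $N(v)$ by $N(u)$ shifts the degree of every vertex in the symmetric difference $N(u)\triangle N(v)$ by $\pm 1$ and can destroy previously ``equivalent'' pairs. Rather than trying to repair this, the clean fix is to argue directly on an extremal graph. Let $G^*$ be any $K_{r+1}$-free graph on $n$ vertices with the maximum number of edges (it exists by finiteness). If $u\not\sim v$, $v\not\sim w$ but $u\sim w$, then by your first observation maximality forces $d(u)=d(v)=d(w)$; now replace \emph{both} $u$ and $w$ by clones of $v$. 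The result is still $K_{r+1}$-free and gains exactly $2d(v)-(d(u)+d(w)-1)=1$ edge, a contradiction. Hence non-adjacency in $G^*$ is an equivalence relation, $G^*$ is complete multipartite with at most $r$ parts, and your balancing step gives $G^*=T_{n,r}$. This handles both the inequality and the equality case in one stroke, with no iteration and no delicate bookkeeping.
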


Many different proofs of Tur\'{a}n's theorem could be found in the literature;
see \cite[pp. 294--301]{Bollobas78} for more details.
Furthermore,
there are various extensions and generalizations on Tur\'{a}n's theorem;
see, e.g., \cite{BT1981,Bon1983}. 
We say that a graph $F$ is {\it color-critical} if   there exists an edge $e$ of $F$ such that $\chi(F-e)<\chi(F)$, where $F-e$ denotes the graph obtained from $F$ by deleting the edge $e$. 
This is a very broad and important class of graphs. 
For example, cliques, odd cycles, wheels with even order, cliques with one edge removed, complete bipartite graphs plus an edge, books, joints, and the Grotzsch graph are color-critical; see \cite{Mub2010,PY2017}. 
Extending the Tur\'{a}n theorem, Simonovits \cite{S1968} proved the following result by using the so-called `progressive induction'.  

\begin{theorem}[Simonovits \cite{S1968}] 
\label{thm-Sim}
Let $F$ be a color-critical graph with $\chi(F)=r+1\geq 3$. Then there exists $n_{0}$ such that for any $F$-free graph $G$ on $n\geq n_{0}$ vertices, we have 
\[  e(G)\leq e(T_{n,r}), \] 
 where the equality holds if and only if $G=T_{n,r}$.
\end{theorem}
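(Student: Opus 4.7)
The plan is to combine the Erd\H{o}s--Simonovits stability theorem with an embedding argument that exploits the color-critical structure of $F$. Fix an edge $e = xy$ of $F$ whose deletion drops the chromatic number to $r$, and fix a proper $r$-coloring of $F - e$ with color classes $A_1, \ldots, A_r$ in which $\{x, y\} \subseteq A_1$. The ability to force both endpoints of the critical edge into a single color class is the key structural feature we will exploit below.

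First, I would invoke the Erd\H{o}s--Stone theorem to conclude $\mathrm{ex}(n, F) = (1 - 1/r + o(1))\binom{n}{2}$, and then apply the Erd\H{o}s--Simonovits stability theorem: for any $\varepsilon > 0$, every $F$-free graph $G$ on sufficiently large $n$ vertices with $e(G) \ge e(T_{n,r})$ admits an $r$-partition $V(G) = V_1 \cup \cdots \cup V_r$, chosen to minimize the number of in-part edges, satisfying $\sum_i e(V_i) \le \varepsilon n^2$ and $\bigl||V_i| - n/r\bigr| \le \varepsilon n$. Next, I would bootstrap this to a minimum-degree condition by induction on $n$: if some $v$ had $d_G(v) < (1 - 1/r)n - C$ for a suitable constant $C$, then $e(G - v) > e(T_{n-1, r})$, and the inductive hypothesis would force $G - v = T_{n-1, r}$, from which one can embed $F$ using $v$ and a handful of its non-neighbors, a contradiction. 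Hence $\delta(G) \ge (1 - 1/r)n - C$.

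The main step is to rule out in-part edges. Suppose $uv \in E(G)$ with $u, v \in V_1$. The degree bound together with $\sum_i e(V_i) \le \varepsilon n^2$ ensures that $u$ and $v$ are each adjacent to all but $o(n)$ vertices of every $V_j$ with $j \ne 1$, and that for all $j \ne k$ the bipartite graph between $V_j$ and $V_k$ misses only $o(n^2)$ edges. I can then greedily embed $F - e$ with $x \mapsto u$, $y \mapsto v$, and the remaining vertices of each $A_j$ placed in $V_j$, since at each step the pool of valid candidates in the target part has size $\Omega(n)$. The edge $uv$ then completes a copy of $F$ in $G$, a contradiction. Therefore $G$ is $r$-partite, and since $T_{n,r}$ uniquely maximizes the number of edges among $n$-vertex $r$-partite graphs, $e(G) \ge e(T_{n,r})$ forces $G = T_{n,r}$.

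The main obstacle is the greedy embedding itself: one must simultaneously pin the two vertices $x, y$ from a single color class of $F - e$ to prescribed endpoints while freely placing the remaining $|V(F)| - 2$ vertices into the other parts. Success rests on careful defect counting, namely verifying that after discarding the $O(1)$ already-used vertices and the $o(n)$ vertices in each part with ``bad'' adjacencies, a linear-sized pool of candidates remains. This is precisely where the color-critical hypothesis is crucial: without it one could not route both endpoints of the would-be critical edge into the same color class, and the embedding would collapse.
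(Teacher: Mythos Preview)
The paper does not supply its own proof of this statement; Theorem~\ref{thm-Sim} is quoted as background from Simonovits~\cite{S1968}, with only the remark that the original argument proceeds by ``progressive induction.'' There is therefore no in-paper proof to compare your attempt against.

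For what it is worth, your stability-plus-embedding outline is the standard modern route to Simonovits' theorem and is essentially sound. One small wrinkle: in the minimum-degree bootstrap you write that ``the inductive hypothesis would force $G - v = T_{n-1,r}$, from which one can embed $F$ using $v$ and a handful of its non-neighbors.'' This is muddled. If $d_G(v) < \delta(T_{n,r})$ and $e(G) \ge e(T_{n,r})$, then $e(G-v) > e(T_{n-1,r})$ strictly, and the inductive hypothesis applied to the $F$-free graph $G-v$ already gives a contradiction; no further embedding involving $v$ is needed. You should also take care with the base of the induction (the statement only holds for $n \ge n_0$), for instance by the usual device of iteratively deleting minimum-degree vertices and checking via the Erd\H{o}s--Stone bound that the process cannot shrink the graph below a linear fraction of $n$. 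With those tidied up, the argument goes through.
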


 In the last few decades, there has been significant development in the field of spectral graph theory, which is an advanced and specialized area of combinatorics that combines graph theory with algebraic theory; see \cite{BH2012,GR2001}.
Another recent trend in extremal graph problems motivated by Turán's theorem is significant progress in spectral extremal graph theory, which studies the extremal structures of graphs using the eigenvalues of various matrices associated with graphs, such as the adjacency matrix and the signless Laplacian matrix. Many classical edge-extremal results have been translated into spectral statements, which not only lead to improvements but also deepen the understanding of the interplay between edge-extremal and spectral-extremal problems; see \cite{Niki2011}.

The {\it adjacency matrix} of a graph $G$ is defined as $A(G)=[a_{ij}]_{n\times n}$ with $a_{ij}=1$ if $ij\in E(G)$, and $a_{ij}=0$ otherwise. The {\it adjacency spectral radius} of $G$, denoted by $\lambda (G)$, is the maximum modulus of the eigenvalues of $A(G)$. 
In 1986, Wilf \cite{Wil1986} proved that if $G$ is a $K_{r+1}$-free graph on $n$ vertices, then
\begin{equation} \label{eq-Wilf}
    \lambda (G) \le \left( 1- \frac{1}{r}\right)n.
\end{equation}
This implies the Tur\'{a}n bound in (\ref{eq-weak}) immediately by  the fundamental inequality $\lambda (G)\ge \frac{2e(G)}{n}$.
Nikiforov \cite{Niki2007laa2} and
Guiduli \cite{Gui1996} independently obtained the spectral version of Theorem \ref{thm-Turan} in terms of the adjacency spectral radius. 
Furthermore,  Nikiforov \cite{N2009} generalized this result and proved the adjacency spectral analog of Theorem \ref{thm-Sim}. Later, Nikiforov proposed the spectral Tur\'{a}n-type problem: What is the maximum spectral radius of an $n$-vertex graph without certain subgraphs? 
 This spectral problem has rapidly emerged as a focal point of research in extremal graph theory  and has yielded a lot of significant results. We refer the reader to related surveys \cite{Niki2011,LiLF}.

\subsection{The signless Laplacian spectral radius}

In this paper, we shall pay attention mainly to another significant spectral extension of the Tur\'{a}n theorem and the Simonovits theorem. 
The {\it signless Laplacian matrix} of a graph $G$ is defined as 
\[ Q(G):=D(G)+A(G),\]
where $D(G)$ and $A(G)$ are the degree diagonal matrix and the adjacency matrix of $G$, respectively. 
The {\it signless Laplacian spectral radius} (also called the {\it $Q$-index}) of $G$, denoted by  $q(G)$, is the maximum modulus of eigenvalues of $Q(G)$. It is a well-known fact that 
\begin{equation} \label{eq-bound-q}
  \frac{4e(G)}{n} \le 2 \lambda (G) \le  q(G)\le 2\Delta (G), 
\end{equation}
where $\Delta (G)$ is the maximum degree of $G$. 
 So any upper bound on $q(G)$ can yield corresponding bounds on $\lambda (G)$ and $e(G)$, respectively. 
It was shown by de Abreu and Nikiforov \cite{AN2013} that if $G$ is an $n$-vertex $K_{r+1}$-free graph, then  
\begin{equation} \label{eq-AN}
    q(G) \le \left( 1-\frac{1}{r}\right)2n. 
\end{equation} 
This result extends both (\ref{eq-weak}) and (\ref{eq-Wilf}) due to (\ref{eq-bound-q}). 
 He, Jin, and Zhang \cite[Theorem $1.3$]{HJZ2013} sharpened (\ref{eq-AN}) and proved an extension of Theorem \ref{thm-Turan} in terms of the signless Laplacian spectral radius.

\begin{theorem}[He--Jin--Zhang \cite{HJZ2013}]\label{partite}
If $G$ is an $n$-vertex $K_{r+1}$-free graph, then 
\[  q(G)\leq q(T_{n,r}).\] 
Moreover, the equality holds if and only if $G$ is a complete bipartite graph for $r=2$ and the $r$-partite Tur\'{a}n graph $T_{n,r}$ for every $r\geq3$.
\end{theorem}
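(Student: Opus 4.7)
The plan is to show that the $K_{r+1}$-free graph $G$ on $n$ vertices maximizing $q(G)$ must be $T_{n,r}$ (for $r \geq 3$) or a complete bipartite graph (for $r = 2$). The proof proceeds in two stages: first reducing to a complete multipartite graph via a shifting argument, and then optimizing within the class of complete multipartite graphs with at most $r$ parts.

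\emph{Stage 1 (reduction to complete multipartite).} Let $G$ be a $K_{r+1}$-free graph achieving the maximum value of $q(G)$. Since the entries of $Q(\cdot)$ are monotone in the edge set, adding an edge to any connected component strictly increases its $q$-index; hence $G$ is edge-maximal $K_{r+1}$-free, and after handling the disconnected case we may work on a connected graph whose Perron eigenvector $x$ is strictly positive. Suppose $G$ is not complete multipartite: then non-adjacency on $V(G)$ fails to be transitive, so there exist vertices $u,v,w$ with $uv,uw \notin E(G)$ and $vw \in E(G)$. Without loss of generality assume $x_u \geq x_v$, and consider the transformation that deletes every edge incident to $v$ and re-attaches $v$ to the full neighborhood $N_G(u)$, producing $G'$. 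Any $K_{r+1}$ in $G'$ that contains $v$ must avoid $u$ (since $uv \notin E(G')$), and transplanting $v \mapsto u$ yields a $K_{r+1}$ in $G$ through $u$, a contradiction; so $G'$ is $K_{r+1}$-free. Testing $Q(G')$ against the vector $y$ obtained from $x$ by setting $y_v := x_u$, and using the eigen-identity $\sum_{z \in N_G(u)} x_z = (q(G)-d_G(u))x_u$ from $Q(G)x = q(G)x$, one verifies $y^T Q(G')y \geq q(G)\,y^T y$. The loss of the edge $vw$ (whose Rayleigh contribution $(x_v+x_w)^2$ is strictly positive) yields strict inequality, contradicting the maximality of $G$. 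The case $x_v > x_u$, and the symmetric role of $w$, are handled by an analogous shift, possibly combining two such moves.

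\emph{Stage 2 (optimization within complete multipartite graphs).} Write $G = K_{n_1,\ldots,n_s}$ with $s \leq r$. Two local moves strictly enlarge the edge set while keeping the graph complete multipartite with at most $r$ parts: (i) if $s < r$, split the largest part into two nonempty blocks; (ii) if $|n_i - n_j| \geq 2$ for some $i \neq j$, move a vertex from the larger part to the smaller. A direct count shows each move adds a positive number of edges, and since $Q(\cdot)$ is entrywise monotone in the edge set, each strictly increases $q(G)$. Iterating forces $s = r$ and $|n_i - n_j| \leq 1$, i.e., $G = T_{n,r}$, when $r \geq 3$. For $r = 2$, a short computation via the quotient matrix on the bipartition gives $q(K_{s,n-s}) = n$ for every $s \in \{1,\ldots,n-1\}$, which explains the broader equality characterization in that case.

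The main obstacle lies in Stage 1: one must simultaneously verify that the chosen shift preserves $K_{r+1}$-freeness and strictly increases the Rayleigh quotient of $Q$. This is more delicate for the signless Laplacian than for the adjacency matrix because the diagonal degree term $D(G)$ also changes under the shift, so the sign of $y^T Q(G')y - q(G)\,y^T y$ must be extracted carefully via the eigen-identity above. Depending on the relative order of $x_u, x_v, x_w$ and the witness triple, the transformation may need minor modifications, and the strict-inequality portion requires isolating the contribution of the broken edge $vw$ rather than relying solely on generic monotonicity.
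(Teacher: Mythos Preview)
The paper does not give its own proof of this theorem; it is quoted from He--Jin--Zhang \cite{HJZ2013} and used as a black box. Your two-stage outline (Zykov symmetrization to complete multipartite, then optimize the partition) is indeed the strategy in \cite{HJZ2013}, but your Stage~2 contains a concrete error. The balancing move---shifting a vertex from a larger part to a smaller one---does \emph{not} produce a supergraph: the moved vertex loses all of its edges to the receiving part while gaining edges to its former part. Only the \emph{net} edge count goes up, and $q(G)$ is not monotone in $e(G)$, so your appeal to ``$Q(\cdot)$ is entrywise monotone in the edge set'' is invalid here. The actual argument in \cite{HJZ2013} passes to the $r\times r$ quotient matrix on the equitable partition by color classes and analyzes how its largest eigenvalue depends on $(n_1,\ldots,n_r)$; this is precisely where the case $r=2$ (every $K_{s,n-s}$ has $q=n$) separates from $r\geq 3$.

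In Stage~1 you already flag the main obstacle, so I add only two remarks. First, the eigen-identity does not by itself cancel the term $\sum_{j\in N(\cdot)}x_j^2$ that appears when one expands $\sum_{ij\in E}(x_i+x_j)^2$, so the inequality $y^{T}Q(G')y\geq q(G)\,y^{T}y$ is not a one-line consequence; a genuine case split on which of $x_u,x_v,x_w$ is smallest, together with an appropriate choice of shift, is needed. Second, your sentence ``the loss of the edge $vw$ \ldots\ yields strict inequality'' is backwards: removing an edge \emph{decreases} the Rayleigh contribution, so strictness must be extracted from $x_u>x_v$ or from $y$ failing to be the Perron eigenvector of $G'$, not from the deleted edge.
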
 

Note that when $r=2$,  the extremal graph achieving the maximum signless Laplacian spectral radius is not unique, which
is slightly different from the case of the adjacency spectral radius. 

In $2013$, Freitas, Nikiforov and Patuzzi \cite{FNP2013} proposed the Tur\'an-type extremal problem for the signless Laplacian spectral radius: 
 What is the maximum signless Laplacian spectral radius of an $F$-free graph of order $n$? 
Many significant results for some specific graphs have already been investigated in the past few decades; see, e.g.,  paths \cite{NY1}, cycles \cite{FNP2013,NY2,Y2014,CWZ2022}, 
complete bipartite graphs \cite{AFNP2016,CZ2021}, 
Hamilton cycles \cite{Zhou2010,LLPLMA18}, 
linear forests \cite{CLZ2020}, 
matchings \cite{Yu2008,PLZ2020}, 
friendship graphs \cite{ZHG2021}, 
cliques \cite{ZXL2020,LMX2022,ZW2024}, 
flowers \cite{CLZ2024}, fan graphs \cite{WZ2023} and 
books \cite{KW2017,CJZ2025}. 

\medskip 
\noindent 
{\bf Motivation.} 
In the references mentioned above, the majority of conclusions primarily focus on graphs $F$ with chromatic number $\chi(F) \le 3$.  
Let $S_{n,k} := K_k \vee I_{n-k}$ denote the split graph obtained by joining every vertex of the complete graph $K_k$ to every vertex of an independent set of size $n-k$. 
The graph $S_{n,k}$ plays an important role in the study of the signless Laplacian spectral radius, comparable to the role of the Tur\'{a}n graph $T_{n,r}$ in classical Tur\'{a}n-type problems.   
Apart from cliques, there is no existing research on graphs $F$ with $\chi(F) \ge 4$.  
This phenomenon suggests that studying the signless Laplacian spectral Tur\'{a}n-type problem for graphs $F$ with large chromatic number may be substantially more challenging. 
To elaborate, most references concentrate on the problems for $F$-free graphs with $\chi(F) \le 3$, because current methods rely heavily on an upper bound on the signless Laplacian spectral radius involving the degrees of vertices  \cite{Mer1998}: 
\[  q(G) \le \max_{v \in V(G)} \left\{ d(v) + \frac{1}{d(v)} \sum_{w \in N(v)} d(w) \right\}. \]    
This bound is effective for spectral graph problems where the extremal graph is structurally close to a split graph $S_{n,k}$, as it helps characterize extremal graph structures through vertex degrees. However, for extremal graphs resembling Tur\'{a}n graphs $T_{n,r}$, this upper bound is less useful. Consequently, new methods are required to address the $F$-free graphs with $\chi(F) \ge 4$.

\section{Main results}

In this paper, we determine the maximum signless Laplacian spectral radius of an $n$-vertex graph excluding any color-critical graph $F$ with chromatic number $\chi(F) \ge 4$. 
Our result not only extends Theorem \ref{partite}, but can also be viewed as a spectral extension of Theorem \ref{thm-Sim}.

\begin{theorem}[Main result] \label{color} 
Let $F$ be a color-critical graph with $\chi(F)=r+1\geq 4$. Then there exists $n_{0}$ such that for every  $F$-free graph $G$ on $n\geq n_{0}$ vertices, we have 
\[  q(G)\leq q(T_{n,r}), \] 
 where the equality holds if and only if $G$ is the $r$-partite Tur\'{a}n graph $T_{n,r}$.
\end{theorem}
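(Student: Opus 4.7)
The plan is to establish a signless Laplacian analogue of the spectral criterion of Keevash, Lenz and Mubayi and combine it with the edge-extremal Theorem~\ref{thm-Sim}. Let $G$ be an $F$-free graph on $n$ vertices with $q(G) \geq q(T_{n,r})$; since $q(T_{n,r}) \geq 4e(T_{n,r})/n \geq 2(1-1/r)n - O(1)$, the spectrum lies near the Tur\'an threshold. The first task is to deduce the almost-tight edge lower bound $e(G) \geq e(T_{n,r}) - o(n^2)$. I would combine the Perron equation with a Nikiforov-type upper estimate of the form $q(G) \leq (2 e(G)/n)(1+o(1)) + o(n)$ for $F$-free $G$, obtained via supersaturation together with the color-criticality of $F$. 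Sandwiching this against Theorem~\ref{thm-Sim} and applying Erd\H{o}s--Simonovits stability yields an $r$-partition $V(G) = V_1 \cup \dots \cup V_r$ with $\sum_i e(G[V_i]) + \sum_{i<j}\bigl(|V_i||V_j| - e_G(V_i,V_j)\bigr) = o(n^2)$.

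Next, let $\mathbf{x}$ be the Perron eigenvector of $Q(G)$ normalized by $\|\mathbf{x}\|_\infty = 1$. Exploiting $(q(G) - d(v)) x_v = \sum_{u \sim v} x_u$ together with the lower bound on $q(G)$ and the $o(n^2)$ partition error, I would prove that every part has size $n/r + o(n)$, every vertex has degree $(1-1/r)n + o(n)$, and the Perron weights are nearly constant on each part. This quantitative regularity feeds into the main \emph{local switching} step: for any vertex $v \in V_i$ possessing a non-neighbor in some $V_j$ ($j \neq i$) or a neighbor inside $V_i$, form $G'$ by deleting every edge at $v$ and re-joining $v$ to all of $V(G) \setminus V_i$. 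A Rayleigh quotient computation with $\mathbf{x}$ then shows $q(G') \geq q(G)$, strictly unless no such $v$ exists. To finish, color-criticality of $F$ is used to verify $G'$ is still $F$-free after cleaning up the $o(n^2)$ bad edges in a carefully chosen order; this forces $G$ to be complete $r$-partite, and a standard rearrangement among complete $r$-partite graphs pins $G = T_{n,r}$.

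The main obstacle is the switching step. Unlike in the adjacency case, flipping an edge $vw$ in $Q(G)$ alters both the $v,w$ diagonal entries and the $vw$ off-diagonal entries, so the change in the Rayleigh quotient scales like $(x_v + x_w)^2$ rather than $2 x_v x_w$; controlling its sign demands the tight Perron weight estimates from the previous step. Equally delicate is the preservation of $F$-freeness: the $o(n^2)$ bad edges inside the parts must be removed in a carefully controlled order, with color-criticality of $F$ used to rule out any newly created copy of $F$. The condition $\chi(F) = r+1 \geq 4$ enters both in the final identification of $T_{n,r}$ (the $r = 2$ case fails, as noted after Theorem~\ref{partite}) and in preserving $F$-freeness under the switch.
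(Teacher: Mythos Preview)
Your opening line invokes the Keevash--Lenz--Mubayi criterion, and that is exactly what the paper establishes (Theorem~\ref{cri}); but the detailed plan you then sketch is \emph{not} that criterion --- it is a stability-plus-switching argument, which is a genuinely different route. The paper's actual proof of Theorem~\ref{color} is much shorter than your outline: the $Q$-analogue of KLM says that if the two numerical hypotheses (\ref{dl1})--(\ref{dl2}) hold, then any $F$-free $H$ with $q(H)\ge q(\mathcal G_n)$ already lies in the high-minimum-degree class $\mathcal G_n$ (this is proved by iterated deletion of minimum-eigenvector-weight vertices, Lemmas~\ref{mind}--\ref{dv}). To deduce Theorem~\ref{color} one then only checks (\ref{dl1}) from Simonovits' Theorem~\ref{thm-Sim}, and (\ref{dl2}) by observing that the Erd\H os--Simonovits degree-stability Lemma~\ref{lem-ES1973} forces every member of $\mathcal G_n$ to be $r$-partite, whence $q(\mathcal G_n)=q(T_{n,r})$ by He--Jin--Zhang (Theorem~\ref{partite}). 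No edge-count stability, eigenvector regularity, or switching is ever needed.

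Your alternative route has two concrete gaps. First, the bound you state, $q(G)\le (2e(G)/n)(1+o(1))+o(n)$, is false already for $G=T_{n,r}$, since $q(T_{n,r})\sim 2(1-1/r)n$ whereas $2e(T_{n,r})/n\sim (1-1/r)n$; you presumably mean $4e(G)/n$, but even that inequality for general $F$-free $G$ is not a stock result and would itself require an argument. Second --- and you acknowledge this --- the local switch that makes $v$ complete to $V\setminus V_i$ will typically destroy $F$-freeness whenever bad edges remain in other parts (for instance if $F=K_{r+1}$, a single surviving edge inside some $V_j$ together with $v$ and one vertex from each remaining part already yields $K_{r+1}$). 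Saying the bad edges will be ``cleaned up in a carefully chosen order'' is precisely where the missing idea lies; the paper sidesteps the issue entirely by passing through minimum degree and Lemma~\ref{lem-ES1973}, which delivers exact $r$-partiteness in one step.
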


\noindent 
{\bf Remark.} The assumption $\chi (F) \geq 4$ in Theorem \ref{color} is required. 
This is slightly different from the assumption in Theorem \ref{thm-Sim}. For example, we choose $F$ as an odd cycle $C_{2k+1}$ for every $k\ge 2$. Observe that $C_{2k+1}$ is color-critical and $\chi(C_{2k+1})=3$. 
However, the graph $S_{n,k}$ is $C_{2k+1}$-free and $q(S_{n,k})> n=q(T_{n,2})$. 
In fact, it was shown in \cite{FNP2013,Y2014} that $S_{n,k}$ achieves the maximum signless Laplacian spectral radius among all $n$-vertex $C_{2k+1}$-free graphs for every $n\ge 110k^2$.

\begin{fact} \label{fact}
    If $G$ is an $n$-vertex graph with $q(G)\le q(T_{n,r})$, then $e(G)\le e(T_{n,r})$. 
\end{fact}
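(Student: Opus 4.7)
The plan is to convert the spectral hypothesis into a combinatorial bound via the Rayleigh quotient, and then close the last integrality gap using the near-regularity of $T_{n,r}$.

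First, I would apply the Rayleigh quotient of $Q(G)$ to the all-ones vector $\mathbf{1}$:
\[
q(G) \ge \frac{\mathbf{1}^{T} Q(G) \mathbf{1}}{\mathbf{1}^{T} \mathbf{1}} = \frac{\sum_{v} d(v) + 2e(G)}{n} = \frac{4e(G)}{n},
\]
which is the leftmost inequality in (\ref{eq-bound-q}). Combined with the hypothesis $q(G) \le q(T_{n,r})$, this yields $e(G) \le n\, q(T_{n,r})/4$; and since $e(G)$ is a nonnegative integer,
\[
e(G) \le \lfloor n\, q(T_{n,r})/4 \rfloor.
\]

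The fact therefore reduces to the identity $\lfloor n\, q(T_{n,r})/4 \rfloor = e(T_{n,r})$, equivalently to the two-sided estimate $e(T_{n,r}) \le n\, q(T_{n,r})/4 < e(T_{n,r}) + 1$. The left inequality is the same Rayleigh bound applied to $T_{n,r}$ itself. For the strict right inequality I would exploit the equitable bipartition of $V(T_{n,r})$ into vertices of parts of size $\lceil n/r \rceil$ and vertices of parts of size $\lfloor n/r \rfloor$; the corresponding $2 \times 2$ quotient matrix has largest eigenvalue exactly $q(T_{n,r})$, and its entries are explicit polynomials in $q = \lfloor n/r \rfloor$ and $s = n - rq$. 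Solving the resulting quadratic characteristic polynomial in closed form and comparing with $4(e(T_{n,r})+1)/n$ then yields the desired strict inequality.

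The main obstacle is this explicit comparison, which must hold uniformly in $n$ and $r$. The guiding intuition is that $T_{n,r}$ is almost regular: its vertex degrees take at most two values differing by $1$, so the irregularity discrepancy $q(T_{n,r}) - 4e(T_{n,r})/n$ has order $1/n$, vanishing when $r \mid n$ and, crucially, always strictly smaller than $4/n$. Once this strict separation is established, integrality of $e(G)$ forces $e(G) \le e(T_{n,r})$, completing the argument.
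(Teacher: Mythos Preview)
Your proposal is correct and follows essentially the same route as the paper: both use $e(G)\le \tfrac{n}{4}q(G)\le \tfrac{n}{4}q(T_{n,r})$ and then the strict inequality $\tfrac{n}{4}q(T_{n,r})<e(T_{n,r})+1$ together with the integrality of $e(G)$. The paper simply states this strict inequality as a direct calculation (deferring details to \cite[Corollary~2.5]{HJZ2013}), whereas you spell out the natural method---computing $q(T_{n,r})$ from the $2\times 2$ quotient matrix of the equitable bipartition into big-part and small-part vertices---which is exactly how that calculation is carried out.
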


 By a direct calculation, one can verify that 
$ \frac{n}{4}q(T_{n,r})< e(T_{n,r}) +1$. If  $q(G)\le q(T_{n,r})$, then  
$e(G)\le \frac{n}{4}q(G)\le \frac{n}{4}q(T_{n,r})< e(T_{n,r}) +1$; see \cite[Corollary 2.5]{HJZ2013} for details. 
Hence, we emphasize that Theorem \ref{color} implies Theorem \ref{thm-Sim} except for the uniqueness of the extremal graph.

The {\it generalized book} $B_{r,k}$ 
is a graph obtained from $k$ copies of 
$K_{r+1}$ sharing a common $K_r$. 
In other words, we have $B_{r,k}:=K_r\vee I_k$, which is obtained by joining every vertex of a clique $K_r$ to every vertex of an independent set $I_k$ of size $k$. 
For $r\ge 3$ and $k\ge 1$, we see that $B_{r,k}$ is color-critical with $\chi (B_{r,k}) = r+1\ge 4$. 
Thus, Theorem \ref{color} 
implies the following result. 

\begin{coro} \label{thm-Brk}
    Let $r\ge 3, k\ge 1$ be fixed and $n$ be sufficiently large. 
If $G$ is a $B_{r,k}$-free graph on $n$ vertices, 
then  
$ q (G)\le q(T_{n,r})$,    
with equality if and only if $G=T_{n,r}$. 
\end{coro}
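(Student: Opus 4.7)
The plan is to invoke Theorem~\ref{color} with $F = B_{r,k}$, which reduces the task to verifying the two structural hypotheses on this specific family: that $\chi(B_{r,k}) = r+1$ (which is at least $4$ when $r \geq 3$), and that $B_{r,k}$ is color-critical.

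For the chromatic number, I would dispatch both bounds quickly. The lower bound $\chi(B_{r,k}) \geq r+1$ follows because $B_{r,k} = K_r \vee I_k$ contains $K_{r+1}$ as a subgraph (take the $K_r$ together with any single vertex of $I_k$). For the matching upper bound, the $(r+1)$-coloring that assigns distinct colors $1, 2, \ldots, r$ to the vertices of $K_r$ and color $r+1$ to every vertex of $I_k$ is clearly proper, since $I_k$ is independent and every vertex of $I_k$ is joined only to the $K_r$ side.

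To verify color-criticality, I would choose an edge $e = v_1 v_2$ sitting \emph{inside} the $K_r$ part and construct a proper $r$-coloring of $B_{r,k} - e$. After removing $e$, the vertices $v_1$ and $v_2$ are no longer adjacent, so both may receive color $1$; the remaining $r-2$ vertices of $K_r$ take distinct colors $2, 3, \ldots, r-1$; and every vertex of $I_k$ can then receive color $r$, since each such vertex is adjacent only to vertices of $K_r$, which now use only colors $1, \ldots, r-1$. Hence $\chi(B_{r,k} - e) \leq r < r+1 = \chi(B_{r,k})$, confirming that $B_{r,k}$ is color-critical.

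Once these two facts are in hand, Theorem~\ref{color} applies directly with $F = B_{r,k}$ and yields, for all sufficiently large $n$, that every $B_{r,k}$-free graph $G$ on $n$ vertices satisfies $q(G) \leq q(T_{n,r})$, with equality precisely when $G = T_{n,r}$. The main obstacle for this corollary is not in the derivation itself but is already absorbed into Theorem~\ref{color}; the argument here is pure verification that the generalized book fits the color-critical framework, so no separate spectral analysis is required.
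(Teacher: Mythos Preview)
Your proposal is correct and matches the paper's approach exactly: the paper simply asserts that $B_{r,k}$ is color-critical with $\chi(B_{r,k}) = r+1 \ge 4$ and invokes Theorem~\ref{color}, and you have filled in the routine verification of those two facts.
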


\noindent 
{\bf Remark.} 
 Corollary \ref{thm-Brk} does not hold in the case $r=2$. Let $G$ be the graph obtained from the complete bipartite graph $K_{n-k+1,k-1}$ by adding an edge to the partite set of size $n-k+1$. Clearly, $G$ is $B_{2,k}$-free and $q(G)> n= q(T_{n,2})$.  
We refer to \cite{CJZ2025} for book-free graphs. 
Extending the Tur\'{a}n theorem, Dirac \cite{Dirac1963} proved that if $n\ge r+2$ and $G$ is an $n$-vertex $B_{r,2}$-free graph, then 
$e(G)\le e(T_{n,r})$. By Fact \ref{fact}, Corollary \ref{thm-Brk} could be viewed as an extension of the result of Dirac.

\medskip 
For an integer $k\ge 2$, 
the {\it even wheel} is defined as $W_{2k+2} :=K_1\vee C_{2k+1}$. 
Observe that $W_{2k+2}$ is color-critical and $\chi (W_{2k+2})=4$. So Theorem \ref{color} yields the following corollary. 

\begin{coro} \label{thm-even-wheel}
For fixed $k\ge 2$ and sufficiently large $n$, 
if $G$ is a $W_{2k+2}$-free  graph on $n$ vertices, then 
$ q(G) \le q(T_{n,3})$, 
with equality if and only if $G=T_{n,3}$. 
\end{coro}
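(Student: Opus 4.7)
The plan is to derive this corollary as an immediate specialization of Theorem \ref{color} with $r=3$, so the task reduces to verifying that $W_{2k+2}$ satisfies the two hypotheses on $F$ in that theorem: being color-critical and having chromatic number $r+1 \geq 4$.

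For the chromatic number, I would use the structure $W_{2k+2} = K_1 \vee C_{2k+1}$. Since $C_{2k+1}$ is an odd cycle with $\chi(C_{2k+1}) = 3$ for every $k \geq 1$, and since joining a single apex vertex to any graph raises its chromatic number by exactly one, we obtain $\chi(W_{2k+2}) = 3+1 = 4$. Thus we may take $r=3$ and the condition $\chi(F) = r+1 \geq 4$ is satisfied.

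For color-criticality, I would exhibit an explicit edge whose removal drops the chromatic number. Pick any edge $e$ lying on the rim cycle $C_{2k+1}$. Deleting $e$ transforms the rim into the path $P_{2k+1}$, which is bipartite; hence $W_{2k+2} - e = K_1 \vee P_{2k+1}$ admits a proper $3$-coloring (two colors on the path, a third on the apex). Therefore $\chi(W_{2k+2}-e) = 3 < 4 = \chi(W_{2k+2})$, confirming that $W_{2k+2}$ is color-critical.

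With both hypotheses verified, Theorem \ref{color} applies directly: there exists $n_0$ such that for every $n \geq n_0$ and every $W_{2k+2}$-free graph $G$ on $n$ vertices, $q(G) \leq q(T_{n,3})$, with equality iff $G = T_{n,3}$. Because the corollary is a pure specialization, I expect no real obstacle in this derivation itself; all the genuine difficulty is concentrated in the proof of Theorem \ref{color}, in particular in establishing the signless Laplacian analogue of the Keevash--Lenz--Mubayi criterion referenced in the abstract.
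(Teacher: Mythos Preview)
Your proposal is correct and follows exactly the paper's approach: the paper simply observes that $W_{2k+2}$ is color-critical with $\chi(W_{2k+2})=4$ and invokes Theorem~\ref{color}, while you supply the routine verifications of these two facts in slightly more detail.
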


For integers $r\ge 2$ and $k\ge 3$, 
 the {\it generalized wheel} is defined as 
 $W_{r,k}:=K_r \vee C_k$. 
Note that $W_{r,k}$ is color-critical for every $k\ge 3$, since $r\ge 2$ and any deletion of an edge of the clique $K_r$ decreases its chromatic number. 
Clearly, we have $\chi (W_{r,2k+1}) =r+3$ and $\chi (W_{r,2k}) =r+2$.  
Therefore, Theorem \ref{color} implies the following corollary immediately. 

\begin{coro}
    For fixed $r\ge 2$, $k\ge 3$ and sufficiently large $n$, if $G$ is a $W_{r,k}$-free graph on $n$ vertices, then $q(G)\le q(T_{n,r+2})$ for odd $k$, and $q(G)\le q(T_{n,r+1})$ for even $k$. Moreover, the equality holds if and only if $G=T_{n,r+2}$ or $T_{n,r+1}$, respectively. 
\end{coro}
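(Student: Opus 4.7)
The plan is to derive this corollary as a direct consequence of Theorem \ref{color} by verifying that the generalized wheel $F = W_{r,k} = K_r \vee C_k$ satisfies the two required hypotheses: it is color-critical and has the correct chromatic number. Using the standard identity $\chi(G \vee H) = \chi(G) + \chi(H)$ (which follows because the join forces color classes of the two sides to be disjoint), one computes $\chi(W_{r,k}) = r + \chi(C_k)$. For odd $k \ge 3$ this gives $\chi(W_{r,k}) = r+3$, and for even $k \ge 4$ it gives $\chi(W_{r,k}) = r+2$. Since $r \ge 2$, both values are at least $4$, matching the hypothesis $\chi(F) = r'+1 \ge 4$ needed to apply Theorem \ref{color}.

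Next, I would verify color-criticality by deleting an edge $e = uv$ of the internal clique $K_r$ (this edge exists because $r \ge 2$). In $W_{r,k} - e$ the vertices $u$ and $v$ may receive a common color while the remaining $r-2$ vertices of the clique each take a distinct color, so the modified clique uses only $r-1$ colors; combining this with an optimal coloring of $C_k$ on a disjoint palette yields a proper coloring of $W_{r,k} - e$ with $(r-1) + \chi(C_k) < r + \chi(C_k) = \chi(W_{r,k})$ colors. Hence $W_{r,k}$ is color-critical.

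With both hypotheses in place, Theorem \ref{color} applies. In the odd-$k$ case the parameter $r+1$ of Theorem \ref{color} corresponds to $r+3$, so every $W_{r,k}$-free graph $G$ on $n \ge n_0$ vertices satisfies $q(G) \le q(T_{n,r+2})$, with equality iff $G = T_{n,r+2}$. Analogously, in the even-$k$ case one obtains $q(G) \le q(T_{n,r+1})$ with equality iff $G = T_{n,r+1}$. The uniqueness is inherited directly from Theorem \ref{color}.

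There is essentially no obstacle in this deduction: once Theorem \ref{color} is granted, the entire argument is a short verification. The only point requiring mild care is the role of the assumption $r \ge 2$, which is used both to guarantee $\chi(W_{r,k}) \ge 4$ and to ensure that the internal clique $K_r$ contains an edge to be deleted in the color-criticality check. If one tried to push $r = 1$, then $W_{1,k}$ is just the ordinary wheel $K_1 \vee C_k$, and for odd $k$ the color-critical edge must be sought inside $C_k$ rather than inside $K_r$; the even-$k$ case would moreover drop to $\chi = 3$, outside the scope of Theorem \ref{color}.
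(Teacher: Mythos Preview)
Your proposal is correct and follows exactly the paper's approach: the paper likewise computes $\chi(W_{r,k})=r+3$ for odd $k$ and $r+2$ for even $k$, notes that deleting an edge of the clique $K_r$ (available since $r\ge 2$) drops the chromatic number, and then invokes Theorem~\ref{color}. Your write-up simply spells these verifications out in more detail.
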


\subsection{Application: Degree powers in graphs}

The second part of this paper is devoted to studying the power of degrees in a graph $G$ that forbids a certain subgraph $F$. 
Recall that $d(v)$ denotes the degree of a vertex $v$. Given a real number $p\ge 1$, {\it the degree power} of a graph $G$ is defined as $\sum_{v\in V(G)} d^p(v)$. This is a well-studied graph parameter and is also known as {\it the $p$-norm} of $G$. 
Observe that in the case $p=1$,
we have $\sum_{v\in V(G)} d(v) =2e(G)$.
In the case $p=2$, the value $\sum_{v\in V(G)} d^2(v)$ is called the first Zagreb index; see, e.g., \cite{LL2009-Zagreb}. It is worth noting that the degree power is closely connected to the generalized Tur\'{a}n problem involving star counts; see \cite{G2025}. 
Motivated by the study of Tur\'{a}n number, there has been a wide investigation on estimating  the degree power of a graph $G$ that does not contain a certain subgraph $F$.  We refer to  \cite{BK2004,BK2012,LLQ2019,Niki2009-degree} and references therein.

 During the study of Ramsey theory,
Nikiforov and Rousseau \cite{NR2004} extended the  classical Tur\'{a}n theorem by showing that if $G$ is a $K_{r+1}$-free graph on
$n$ vertices with $m$ edges, then
\begin{equation}    \label{thm-NR-degree}
\sum_{v\in V(G)} d^2(v) \le 2\left( 1-\frac{1}{r}\right)mn.
\end{equation}
The original proof of (\ref{thm-NR-degree}) uses the combinatorial technique.
Recently,  Li, Liu and Zhang \cite[Sec. 5.4]{LLZ2024-book-4-cycle} provided two algebraic proofs of (\ref{thm-NR-degree}), and they also extended (\ref{thm-NR-degree}) to graphs without the generalized book $B_{r,k}$. 
 Under a similar line of their proofs, we give a unified extension on the sum of squares of degrees in an $F$-free graph $G$ for any color-critical graph $F$.

\begin{theorem} \label{thm-1-6}
For any color-critical graph $F$ with $\chi(F) =r+1\geq 4$,  
there exists $n_0$ such that if $G$ is an $F$-free graph on
$n\ge n_0$ vertices with $m$ edges, then
\[  \sum_{v\in V(G)} d^2(v) \le 2\left( 1-\frac{1}{r} \right)mn,\] 
where the equality holds if and only if 
$G$ is a regular complete $r$-partite graph.  
\end{theorem}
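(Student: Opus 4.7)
The plan is to deduce this degree-power inequality from Theorem~\ref{color} via a short Cauchy--Schwarz / Rayleigh-quotient argument, in the spirit of the algebraic proofs given in \cite{LLZ2024-book-4-cycle} for the $K_{r+1}$-free case. The starting point is the double-counting identity
\[
\sum_{v \in V(G)} d^{2}(v) \;=\; \sum_{uv \in E(G)} \bigl(d(u) + d(v)\bigr),
\]
which rewrites the degree-power sum as a sum over the $m$ edges of $G$, obtained by noting that each edge $uv$ contributes $d(u)$ to the $v$-term and $d(v)$ to the $u$-term on the left.

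Next I would apply Cauchy--Schwarz to the $m$ terms on the right to get $\bigl(\sum_{uv\in E}(d(u)+d(v))\bigr)^{2} \le m \sum_{uv\in E}(d(u)+d(v))^{2}$, and observe that the second factor is precisely the quadratic form $x^{\top}Q(G)x$ evaluated at the degree vector $x=(d(v))_{v\in V(G)}$, since $x^{\top}Q(G)x = \sum_{uv\in E}(x_u+x_v)^{2}$. By the Rayleigh characterization of the signless Laplacian spectral radius,
\[
\sum_{uv\in E(G)}(d(u)+d(v))^{2} \;=\; x^{\top}Q(G)x \;\le\; q(G)\,\|x\|_2^{2} \;=\; q(G)\sum_v d^{2}(v).
\]
Chaining this with the opening identity and cancelling the common factor $\sum_v d^{2}(v)$ (the edgeless case being trivial) yields the clean intermediate bound $\sum_v d^{2}(v)\le m\cdot q(G)$.

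At this point Theorem~\ref{color} takes over: for $n\ge n_0$, every $F$-free graph $G$ on $n$ vertices satisfies $q(G)\le q(T_{n,r})$, and since $T_{n,r}$ itself is $K_{r+1}$-free, the de~Abreu--Nikiforov estimate $(\ref{eq-AN})$ applied to $T_{n,r}$ gives $q(T_{n,r})\le 2(1-\tfrac{1}{r})n$. Substituting produces the target bound $\sum_v d^{2}(v) \le 2(1-\tfrac{1}{r})\,mn$. For the equality analysis, equality throughout the chain forces both (i) Cauchy--Schwarz equality, so $d(u)+d(v)$ is constant along every edge, and (ii) $q(G)=q(T_{n,r})$, which by the uniqueness clause of Theorem~\ref{color} yields $G=T_{n,r}$. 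In $T_{n,r}$ a vertex of part $V_i$ has degree $n-|V_i|$, so constancy of $2n-|V_i|-|V_j|$ over edges spanning different pairs of parts forces all $|V_i|$ equal (a nontrivial constraint since $r\ge 3$), whence $r\mid n$ and $G$ is a regular complete $r$-partite graph. The reverse direction is a one-line calculation showing that both sides of the claimed bound equal $(1-\tfrac{1}{r})^{2}n^{3}$.

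The main obstacle is not the short deduction above but Theorem~\ref{color} itself: once the signless Laplacian Simonovits theorem is in hand, the degree-power statement is essentially algebraic, the only delicate point being to propagate the equality characterization cleanly through the Cauchy--Schwarz / Rayleigh chain and to rule out non-regular parts of $T_{n,r}$. The real difficulty handled in the earlier sections is proving that spectral inequality in a regime where the customary Merris-type vertex-degree bound for $q(G)$ is too crude.
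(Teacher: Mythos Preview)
Your argument is correct and essentially identical to the paper's first proof: the paper quotes the bound $q(G)\ge\frac{1}{m}\sum_v d^2(v)$ (equation~(\ref{eq-signless}), with equality iff $d(u)+d(v)$ is edge-constant) and then applies Theorem~\ref{color} together with $q(T_{n,r})\le 2(1-\tfrac{1}{r})n$, which is exactly your chain---the only cosmetic difference is that you reprove (\ref{eq-signless}) via Cauchy--Schwarz and Rayleigh rather than citing it, and you spell out the equality analysis (the paper leaves it implicit).
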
 

\noindent
{\bf Remark.}
 Theorem \ref{thm-1-6} implies $m \le (1- \frac{1}{r})\frac{n^2}{2}$.
Indeed, using the Cauchy--Schwarz inequality, we get 
$\sum_{v\in V} d^2(v) \ge \frac{4m^2}{n}$.
So Theorem \ref{thm-1-6}
implies $\frac{4m^2}{n} \le 2\left(1- \frac{1}{r} \right)mn$, that is,
$m\le \left(1- \frac{1}{r} \right) \frac{n^2}{2}$.

Moreover, Theorem \ref{thm-1-6} also implies the following corollary.

\begin{coro}
For any color-critical graph $F$ with $\chi(F) =r+1\geq 4$,  
there exists $n_0$ such that if $G$ is an $F$-free graph on
$n\ge n_0$ vertices, then 
\[  \sum_{v\in V(G)} d^2(v) \le \left( 1-\frac{1}{r} \right)^2 n^3,\]
where the equality holds if and only if 
$G$ is a regular complete  $r$-partite graph.  
\end{coro}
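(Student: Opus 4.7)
The plan is to reduce Theorem \ref{thm-1-6} to Theorem \ref{color} via a Cauchy--Schwarz inequality that controls $\sum_{v\in V(G)} d^2(v)$ by $m\cdot q(G)$; this parallels the algebraic proof of (\ref{thm-NR-degree}) given by Li, Liu and Zhang \cite{LLZ2024-book-4-cycle}.

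Let $\mathbf{d}=(d(v_1),\ldots,d(v_n))^T$ denote the degree vector. Two standard identities, obtained by double-counting and by expanding the quadratic form of the signless Laplacian, read
\[ \sum_{v\in V(G)} d^2(v) = \sum_{uv\in E(G)} \bigl(d(u)+d(v)\bigr), \qquad \mathbf{d}^T Q(G)\mathbf{d} = \sum_{uv\in E(G)} \bigl(d(u)+d(v)\bigr)^2. \]
Applying the Cauchy--Schwarz inequality to the first sum (taken over $m$ edges) yields $(\sum_v d^2(v))^2 \le m\cdot \mathbf{d}^T Q(G)\mathbf{d}$, while the Rayleigh bound $\mathbf{d}^T Q(G)\mathbf{d}\le q(G)\,\mathbf{d}^T\mathbf{d} = q(G)\sum_v d^2(v)$ handles the other factor. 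For $G$ with at least one edge, dividing through gives the key intermediate bound
\[  \sum_{v\in V(G)} d^2(v) \le m\cdot q(G). \]

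To finish the upper bound, I would invoke Theorem \ref{color}, which gives $q(G)\le q(T_{n,r})$ for $n$ sufficiently large, and then (\ref{eq-AN}) applied to $T_{n,r}$ (which is itself $K_{r+1}$-free) to get $q(T_{n,r})\le 2(1-\tfrac{1}{r})n$. Substituting into the intermediate bound yields $\sum_{v\in V(G)} d^2(v) \le 2(1-\tfrac{1}{r})mn$, as desired.

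For the equality characterization I would trace each step in the chain. Equality in the Cauchy--Schwarz step forces $d(u)+d(v)$ to equal some constant $c$ on every edge; a direct calculation (summing the relation over the neighbors of a fixed vertex) then shows $Q(G)\mathbf{d}=c\mathbf{d}$, so $\mathbf{d}$ is automatically a $c$-eigenvector of $Q(G)$, and equality in the Rayleigh bound forces $c=q(G)$. The condition $d(u)+d(v)=c$ on edges forces each connected component of $G$ to be either regular or bipartite with two distinct degree classes. Equality in Theorem \ref{color} then forces $G=T_{n,r}$, which is non-bipartite for $r\ge 3$, so $G$ must be regular, which in turn requires $r\mid n$. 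Conversely, for $G=T_{n,r}$ with $r\mid n$, both sides of the target inequality equal $(1-\tfrac{1}{r})^2 n^3$. All ingredients are essentially classical once Theorem \ref{color} is in hand, so I do not anticipate a serious technical obstacle here; the only mild delicacy is the joint equality analysis of Cauchy--Schwarz and the Rayleigh quotient outlined above.
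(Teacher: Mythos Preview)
Your argument is essentially the paper's own route: the Cauchy--Schwarz/Rayleigh inequality $\sum_v d^2(v)\le m\,q(G)$ you derive is exactly the bound (\ref{eq-signless}) the paper quotes, and combining it with Theorem~\ref{color} and (\ref{eq-AN}) yields Theorem~\ref{thm-1-6}, from which the paper then deduces the Corollary. The only slip is that you write ``as desired'' after reaching $2(1-\tfrac1r)mn$, which is Theorem~\ref{thm-1-6} rather than the stated Corollary; one more trivial line is needed, namely $m\le(1-\tfrac1r)\tfrac{n^2}{2}$ (from $\tfrac{4m}{n}\le q(G)\le 2(1-\tfrac1r)n$, or directly from Theorem~\ref{thm-Sim}), after which both the bound $(1-\tfrac1r)^2 n^3$ and your equality analysis go through as written.
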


This corollary recovers a result of Bollob\'{a}s and Nikiforov \cite{BK2012} by a quite different method.

\medskip 
\noindent 
{\bf Our approach.} 
Keevash, Lenz and Mubayi \cite{KLM2014} established a criterion, which implies that the spectral extremal problems concerning the $\alpha$-spectral radius of hypergraphs can be derived from the corresponding hypergraph Tur\'an  problems. Motivated by these observations and existing spectral Tur\'an-type results, this paper explores the signless Laplacian spectral Tur\'an-type
problem. Specifically, we present a criterion for the signless Laplacian spectral radius (see Theorem \ref{cri}), which facilitates the transformation of the signless Laplacian spectral Tur\'an-type problem into the classical extremal problem with the so-called degree-stable property (see Lemma \ref{lem-ES1973}).

\section{Some auxiliary lemmas}
In this section, we present a criterion for 
the signless Laplacian spectral radius.  
Our ideas are motivated by the method introduced by  
Keevash, Lenz and Mubayi \cite[Theorem 1.4]{KLM2014}.  
 Although the methods of the proofs are similar, additional finesse is required in some details to resolve the issues for the signless Laplacian spectral radius.

 For a graph  $F$, the {\it Tur\'an number} of $F$, denoted by  $\mathrm{ex}(n,F)$, is defined to be the maximum number of edges of an $F$-free graph on $n$ vertices.   
 The {\it Tur\'an density} of $F$ is defined as $$\pi(F)=\lim\limits_{n\to \infty}\frac{\mathrm{ex}(n,F)}{\binom{n}{2}}.$$ 
 This limit can be shown to exist by a simple monotonicity argument \cite{KNS1964}. 
 The Tur\'{a}n theorem implies that $\pi (K_{r+1})=1- 1/r$. In general, the Erd\H{o}s--Stone--Simonovits 
 theorem  \cite{ES1946,ES1966} states that for every graph $F$ with chromatic number $\chi (F)=r+1$, we have $\pi (F) =1-1/r$.

The main result in this section is as follows:

\begin{theorem}\label{cri}
Let $r\geq 3$ and $F$ be a family of graphs with Tur\'{a}n density $\pi(F) = 1-1/r$. 
For real numbers $0< \varepsilon < 1/2$ and $\sigma < \varepsilon/36$, let $\mathcal{G}_n$ 
be the collection  of all $n$-vertex $F$-free graphs with minimum degree more than $(\pi (F) - \varepsilon)n$. 
We denote $q(\mathcal{G}_{n})=\max\{q(G): G\in \mathcal{G}_{n}\}$. Suppose that there exists $N>0$  such that for every $n\geq N$, we have 
\begin{equation}\label{dl1}
\big|\mathrm{ex}(n,F)-\mathrm{ex}(n-1,F)-\pi(F)n\big|\leq \sigma n,
\end{equation}
and 
\begin{equation}\label{dl2}
\big|q(\mathcal{G}_{n})-4\mathrm{ex}(n,F)n^{-1}\big|\leq \sigma.
\end{equation}
Then there exists $n_{0}\in \mathbb{N}$ such that for any $F$-free graph $H$ on $n\geq n_{0}$ vertices, we have 
$$q(H)\leq q(\mathcal{G}_{n}).$$
In addition, if the equality holds, then $H\in\mathcal{G}_{n}$. 
\end{theorem}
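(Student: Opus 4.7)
The plan is a proof by contradiction that combines a greedy removal of low-degree vertices with a Perron eigenvector analysis. Assume for contradiction that there exists an $F$-free graph $H$ on $n \geq n_0$ vertices with $q(H) \geq q(\mathcal{G}_n)$ yet $H \notin \mathcal{G}_n$; the goal is to derive $q(H) < q(\mathcal{G}_n)$, which simultaneously settles the equality case. Starting from $H_0 := H$, iteratively remove any vertex $v_{i+1}$ of $H_i$ with $d_{H_i}(v_{i+1}) \leq (\pi(F) - \varepsilon)|V(H_i)|$, producing $H_{i+1} = H_i - v_{i+1}$. Since $H \notin \mathcal{G}_n$, at least one removal occurs ($k \geq 1$), and for $n$ sufficiently large the process terminates at an $H_k \in \mathcal{G}_{n-k}$ with $n - k \geq N$, so that \eqref{dl2} applies to $H_k$.

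First, I would bound $e(H)$ from above. Since $H_k$ is $F$-free, $e(H_k) \leq \mathrm{ex}(n-k, F)$, and each removed vertex contributes at most $(\pi(F) - \varepsilon)(n-i+1)$ edges, giving
\[
e(H) \leq \mathrm{ex}(n-k, F) + (\pi(F) - \varepsilon) \sum_{i=1}^{k}(n-i+1).
\]
Telescoping \eqref{dl1} yields $\mathrm{ex}(n, F) - \mathrm{ex}(n-k, F) \geq (\pi(F) - \sigma) \sum_{i=1}^{k}(n-i+1)$, and so
\[
e(H) \leq \mathrm{ex}(n, F) - (\varepsilon - \sigma) \sum_{i=1}^{k}(n-i+1).
\]
Next I would bound $q(H)$ spectrally. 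Let $z$ be the Perron eigenvector of $Q(H)$ normalized with $\|z\|_2 = 1$, and set $S := \{v_1, \ldots, v_k\}$. Writing $q(H) = \sum_{uw \in E(H)}(z_u + z_w)^2$ and splitting this sum into edges inside $V(H) \setminus S$ and edges meeting $S$, the first part is bounded by $q(H_k)\bigl(1 - \sum_{v \in S} z_v^2\bigr)$ via Rayleigh applied to $Q(H_k)$. For the second, the eigenvalue identity $(q(H) - d_H(v_i))\, z_{v_i} = \sum_{u \in N_H(v_i)} z_u$ combined with Cauchy--Schwarz, together with $q(H) \geq q(\mathcal{G}_n) \gtrsim 2\pi(F)n$ and $d_H(v_i) \leq (\pi(F) - \varepsilon)n + k$, gives $z_{v_i}^2 = O(1/n)$; bounding the edge contributions of $S$ via the preceding edge-deficit estimate then produces an inequality of the form $q(H) \leq q(H_k) + 2\pi(F)k - c\,\varepsilon k + O(\sigma k)$ for some positive constant $c$.

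Finally, \eqref{dl2} gives $q(H_k) \leq q(\mathcal{G}_{n-k}) \leq 4\mathrm{ex}(n-k, F)/(n-k) + \sigma$ and $q(\mathcal{G}_n) \geq 4\mathrm{ex}(n, F)/n - \sigma$, while a further telescoping of \eqref{dl1} in ratio form yields $4\mathrm{ex}(n, F)/n - 4\mathrm{ex}(n-k, F)/(n-k) = 2\pi(F)k + O(\sigma k)$. Putting these together with the strict margin $\sigma < \varepsilon/36$ forces $q(H) < q(\mathcal{G}_n)$, contradicting the choice of $H$; strict inequalities whenever $k \geq 1$ deliver the equality characterization. The main obstacle is the spectral bookkeeping step: a naive AM--GM applied to $(z_u + z_v)^2$ loses a factor of two and would overshoot the $2\pi(F)$ rate at which $q(\mathcal{G}_m)$ grows with $m$, so one must instead invoke the eigenvalue identity to replace $\sum_{u \in N(v_i)} z_u$ by $(q(H) - d_H(v_i))\, z_{v_i}$ and absorb the loss into the smallness of $z_{v_i}$. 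The accumulated error terms from this spectral--edge interplay account for the numerical constant $1/36$.
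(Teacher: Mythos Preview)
Your outline has the right shape but diverges from the paper in a way that creates a genuine gap. The paper does \emph{not} delete low-degree vertices; at each step it deletes the vertex $u$ whose entry $x_u$ in the \emph{current} Perron eigenvector is minimal (Lemmas~\ref{min}--\ref{dv}). This choice is what drives both the termination argument and the spectral bookkeeping. Because every other coordinate satisfies $x_j\ge x_u$, one gets the key inequality
\[
\sum_{j\in N(u)} x_j^{2}\;\le\;1-(n-d(u))\,x_u^{2},
\]
which, together with the eigen-equation, yields $q(H-u)\ge q(H)\bigl(1-(1-\varepsilon/6)/(n-1)\bigr)$ and, more importantly, the strict step-invariant $q(H_{i-1})>q(\mathcal{G}_{i-1})$. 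Iterating this invariant and taking the product gives a lower bound on $q(H_N)$ that exceeds $2N$, contradicting $q\le 2\Delta$; hence the process must halt at some $t>N$, and then the invariant forces $t=n$.

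Your version removes an arbitrary vertex of low degree in $H_i$ and analyses everything with the single eigenvector $z$ of the original $H$. Two things break. First, you assert that the process terminates with $n-k\ge N$, but there is no edge lower bound available from $q(H)\ge q(\mathcal{G}_n)$ alone (a star already shows $q$ can be of order $n$ with $O(n)$ edges), so the standard ``greedy deletion stops early'' argument does not apply; the paper's termination is purely spectral and relies on the min-entry choice. Second, your spectral step needs control of $\sum_{i}\sum_{w\in N_H(v_i)} z_w^{2}$, which for an arbitrary low-degree $v_i$ you can only bound by $1$ per vertex, giving a crude $+k$ that swamps the hoped-for gain $-c\varepsilon k$; moreover your estimate $z_{v_i}^{2}=O(1/n)$ uses $d_H(v_i)\le(\pi(F)-\varepsilon)n+k$, which is vacuous once $k$ is comparable to $n$. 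Both issues disappear if you switch to deleting the minimum-eigenvector-entry vertex and recompute the eigenvector at each step, exactly as the paper does.
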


 In the following, we  derive two useful inequalities that are consequence of the assumptions of Theorem \ref{cri}. Suppose that
  $F$, $\sigma$, $n$ and $\mathcal{G}_{n}$ satisfy (\ref{dl1}) and (\ref{dl2}) for all $n\geq N$. 
 Firstly, using (\ref{dl2})
  and the fact that the quotient $\mathrm{ex}(n,F)/\binom{n}{2}$ is decreasing with $n$, and it tends to $\pi(F)$. We have 
  $$\frac{q(\mathcal{G}_{n})}{n}-\frac{4\mathrm{ex}(n,F)}{n^{2}}=o(1).$$
  Therefore, we get 
 \begin{equation}\label{qn}
 q(\mathcal{G}_{n})=(2\pi(F)+o(1))n.
\end{equation}
Secondly, we want to bound the gap between $q(\mathcal{G}_{n})$ and $q(\mathcal{G}_{n-1})$.
 By the triangle inequality, (\ref{dl1}) and (\ref{dl2}), for sufficiently large $n$, we have
 \begin{displaymath}
\begin{split}
& |q(\mathcal{G}_{n})-q(\mathcal{G}_{n-1})-2\pi(F)| \\
&\leq \bigg|\frac{4\mathrm{ex}(n,F)}{n}-\frac{4\mathrm{ex}(n-1,F)}{n-1}-2\pi(F)\bigg|
+2\sigma\\
&= \bigg|\frac{4}{n}\big(\mathrm{ex}(n,F)-\mathrm{ex}(n-1,F)-\pi(F)n\big)+2\pi(F)-\frac{4\mathrm{ex}(n-1,F)}{n(n-1)}\bigg|
+2\sigma\\
&\leq \bigg|2\pi(F)-\frac{4\mathrm{ex}(n-1,F)}{n(n-1)}\bigg|
+6\sigma. 
\end{split}
\end{displaymath}
Then for sufficiently large $n$, we get 
\begin{equation}\label{beg}
|q(\mathcal{G}_{n})-q(\mathcal{G}_{n-1})-2\pi(F)|\leq 7\sigma.
\end{equation}
\subsection{Proof of Theorem \ref{cri}}


In this subsection, we shall give a proof of Theorem \ref{cri}. Let $H$ be an $F$-free graph on $n$ vertices, and $\mathbf{x}=(x_{1},\ldots,x_{n})$ be a unit eigenvector corresponding to $q(H)$, and let $x=\min\{x_{1},\ldots,x_{n}\}$.

\begin{lem}[See \cite{AN2013}]\label{min}
Let $G$ be a graph of order $n$ with $q(G)=q$ and minimum degree $\delta(G)=\delta$. If  $\mathbf{x}=(x_{1},\ldots,x_{n})$ is a nonnegative unit  eigenvector to $q$, then the value $x=\min\{x_{1},\ldots,x_{n}\}$ satisfies 
$$x^{2}(q^{2}-2q\delta+n\delta)\leq \delta.$$
\end{lem}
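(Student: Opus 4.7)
The plan is to expand the eigenvalue equation $Q(G)\mathbf{x}=q\mathbf{x}$ at a vertex of minimum degree, then apply the Cauchy--Schwarz inequality to the resulting neighbor-sum and control the tail of $\|\mathbf{x}\|_{2}^{2}=1$ using the uniform lower bound $x$ on the eigenvector entries.

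Concretely, I would choose a vertex $u$ with $d_{u}=\delta$. The $u$-th coordinate of $Q(G)\mathbf{x}=q\mathbf{x}$ reads $\delta x_{u}+\sum_{j\sim u}x_{j}=qx_{u}$, hence $\sum_{j\sim u}x_{j}=(q-\delta)x_{u}$; Cauchy--Schwarz applied to this sum of $\delta$ terms then gives $(q-\delta)^{2}x_{u}^{2}\le \delta\sum_{j\sim u}x_{j}^{2}$. The crucial estimate is then to bound $\sum_{j\sim u}x_{j}^{2}$ from above: there are exactly $n-1-\delta$ vertices distinct from $u$ and non-adjacent to $u$, each contributing at least $x^{2}$ to the unit norm, and $x_{u}^{2}\ge x^{2}$ as well, which yields
\[ \sum_{j\sim u}x_{j}^{2}\;\le\;1-x_{u}^{2}-(n-1-\delta)x^{2}\;\le\;1-(n-\delta)x^{2}. \]
Substituting back and using $x_{u}\ge x$ on the left gives $(q-\delta)^{2}x^{2}\le \delta\bigl(1-(n-\delta)x^{2}\bigr)$, which rearranges (after expanding $(q-\delta)^{2}$ and cancelling $\delta^{2}$) to $x^{2}\bigl[(q-\delta)^{2}+\delta(n-\delta)\bigr]\le\delta$, i.e.\ the claimed bound $x^{2}(q^{2}-2q\delta+n\delta)\le\delta$.

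The only nontrivial decision is which vertex $u$ to expand about, and this is the step I expect to be the main (mild) obstacle. Choosing $u$ with $x_{u}=x$ is tempting but produces $d_{u}$ rather than $\delta$ in the coefficient, and since $d_{u}$ can strictly exceed $\delta$ one cannot simply substitute (in the expression $q^{2}-2qd_{u}+nd_{u}$ the sign of $n-2q$ is not controlled, so monotonicity in $d_{u}$ goes the wrong way). Working instead at a minimum-degree vertex and deploying the bound $x_{u}\ge x$ only at the final step is the move that makes the coefficient of $\delta$ come out exactly as stated; the rest is Cauchy--Schwarz plus the unit-norm accounting.
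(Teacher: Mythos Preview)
Your argument is correct. Choosing a vertex $u$ of minimum degree, expanding the eigenequation, applying Cauchy--Schwarz to $\sum_{j\sim u}x_j=(q-\delta)x_u$, and then using $x_j\ge x$ for the $n-\delta$ vertices outside $N(u)$ (including $u$ itself) yields exactly $x^{2}\bigl[(q-\delta)^{2}+\delta(n-\delta)\bigr]\le\delta$, which is the claim. Your discussion of why one should anchor at a minimum-degree vertex rather than at the vertex realizing $x$ is also on point: replacing $\delta$ by $d_u$ in $q^{2}-2qd_u+nd_u$ is not monotone in $d_u$ without knowing the sign of $n-2q$, so the choice $d_u=\delta$ is essential.

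Note, however, that the paper does not supply its own proof of this lemma: it is quoted verbatim from de~Abreu and Nikiforov \cite{AN2013} and used as a black box. So there is no in-paper argument to compare against; your proof is precisely the standard derivation one finds in that reference.
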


We first present two lemmas under the conditions of Theorem \ref{cri}.

\begin{lem}\label{mind}
Suppose that $q(H)\geq q(\mathcal{G}_{n})$  and
$\delta(H)\leq(\pi(F)-\varepsilon)n.$
Then for sufficiently large $n$, 
$$x^{2}<\frac{1-\varepsilon}{n}.$$
\end{lem}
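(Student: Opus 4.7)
The plan is to apply Lemma~\ref{min} to $H$ with $q := q(H)$ and $\delta := \delta(H)$, and then to combine the hypotheses $q(H) \geq q(\mathcal{G}_n)$ and $\delta \leq (\pi(F) - \varepsilon)n$ with the asymptotic identity $q(\mathcal{G}_n) = (2\pi(F) + o(1))n$ from~(\ref{qn}). Writing $\pi := \pi(F) = 1-1/r$, I would first check that the denominator $q^2 - 2q\delta + n\delta$ appearing in Lemma~\ref{min} is positive for large $n$: since $q \geq (2\pi+o(1))n$ and $\delta \leq (\pi-\varepsilon)n$, one has $q - 2\delta \geq (2\varepsilon+o(1))n > 0$, so the lemma rearranges to
\[  x^2 \leq \frac{\delta}{q^2 - 2q\delta + n\delta}.  \]

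Next I would observe that the target bound $x^2 < (1-\varepsilon)/n$ follows once one proves
\[  \varepsilon\, n\, \delta < (1-\varepsilon)\, q\,(q - 2\delta),  \]
which is obtained by clearing denominators and collecting terms. Upper-bounding the left side using $\delta \leq (\pi-\varepsilon)n$ gives $\varepsilon n\delta \leq \varepsilon(\pi-\varepsilon)n^2$, while lower-bounding the right side using $q \geq (2\pi+o(1))n$ and $q - 2\delta \geq (2\varepsilon+o(1))n$ gives $q(q-2\delta) \geq (4\pi\varepsilon + o(1))n^2$. Hence for $n$ large it suffices to establish the numerical inequality $\varepsilon(\pi-\varepsilon) < 4\pi\varepsilon(1-\varepsilon)$, which simplifies, after cancelling $\varepsilon > 0$, to $\varepsilon(4\pi - 1) < 3\pi$.

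Finally I would verify this last inequality. The hypothesis $r \geq 3$ of Theorem~\ref{cri} forces $\pi = 1 - 1/r \in [2/3, 1)$, and on this range the function $\pi \mapsto 3\pi/(4\pi-1)$ is decreasing with limit $1$ as $\pi \to 1^-$, so $3\pi/(4\pi-1) > 1$. Since $\varepsilon < 1/2 < 1$, the inequality $\varepsilon(4\pi-1) < 3\pi$ holds with a definite gap, so the $o(1)$ errors collected along the way are harmlessly absorbed for $n$ large. I do not anticipate any serious obstacle here: the only input beyond Lemma~\ref{min} is the asymptotic lower bound on $q(\mathcal{G}_n)$ supplied by hypothesis~(\ref{dl2}), and what remains is routine algebra, with the only mild care being to keep $q - 2\delta > 0$ throughout so that Lemma~\ref{min} can be inverted.
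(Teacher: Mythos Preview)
Your proposal is correct and follows essentially the same approach as the paper: apply Lemma~\ref{min}, invoke the asymptotic lower bound $q(\mathcal{G}_n)=(2\pi(F)+o(1))n$ from~(\ref{qn}), plug in the hypothesis $\delta\le(\pi(F)-\varepsilon)n$, and finish with elementary algebra. The only cosmetic difference is that the paper introduces an auxiliary $\varepsilon'=\pi(F)\varepsilon/(\pi(F)+\varepsilon)$ and uses monotonicity of $\delta/(q^2-2q\delta+n\delta)$ in $\delta$ and $q$ to substitute exact values, whereas you rearrange first to the clean target inequality $\varepsilon(4\pi-1)<3\pi$ and then verify it directly; both routes are equally valid.
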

\begin{proof}
We denote $q=q(H)$ and $\delta(H)=\delta$ for short.
Using Lemma \ref{min}, and the fact that
$$q^{2}-2q\delta+n\delta=(q-\delta)^{2}+\delta(n-\delta)>0,$$
we get
$$x^{2}\leq \frac{\delta}{q^{2}-2q\delta+n\delta}.$$
Noting that the right-hand side increases with $\delta$ and decreases with $q$ on $[\delta,+\infty)$. Setting $\varepsilon'=\pi(F)\varepsilon/(\pi(F)+\varepsilon)$
, in view of (\ref{qn}), for
sufficiently large $n$ we have
$$q\geq q(\mathcal{G}_{n})\geq (2\pi(F)-\varepsilon')n>(\pi(F)-\varepsilon)n\geq \delta.$$
Combining these with $0<\varepsilon<\frac{1}{2}$ and $\delta=\delta(H)\leq(\pi(F)-\varepsilon)n$, we obtain that
\begin{displaymath}
\begin{split}
x^{2}n &\leq \frac{\delta n}{q^{2}-2q\delta+n\delta}
=\frac{\delta n}{n\delta+(q-\delta)^{2}-\delta^{2}}\\ 
&\leq \frac{\pi(F)-\varepsilon}{\pi(F)-\varepsilon
+(\pi(F)+\varepsilon-\varepsilon')^{2}-(\pi(F)-\varepsilon)^{2}}\\
&\leq \frac{\pi(F)-\varepsilon}{\pi(F)-\varepsilon
+4\pi(F)\varepsilon-2(\pi(F)+\varepsilon)\varepsilon'}\\
&= \frac{\pi(F)-\varepsilon}{\pi(F)-\varepsilon
+2\pi(F)\varepsilon}\\
&= 1- \frac{2\pi(F)\varepsilon}{\pi(F)-\varepsilon
+2\pi(F)\varepsilon}\\ 
&\leq  1-\varepsilon, 
\end{split}
\end{displaymath}
completing the proof. 
\end{proof}

\begin{lem}\label{dv}
Let $u$ be a vertex for which $x_{u}=x$. Suppose that $q(H)\geq q(\mathcal{G}_{n})$  and $x^{2}<(1-\varepsilon)/n$. Then for sufficiently large $n$, we have
$$q(H-u)\geq q(H)
\left( 1- \frac{1-\varepsilon/6}{n-1} \right),$$
and
$$q(H-u)> q(\mathcal{G}_{n-1}).$$
\end{lem}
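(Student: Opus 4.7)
The plan is to lower bound $q(H-u)$ via the Rayleigh quotient of the test vector $\mathbf{y}$ obtained by restricting $\mathbf{x}$ to $V(H) \setminus \{u\}$. Using the quadratic-form identity $\mathbf{w}^{T} Q(G)\mathbf{w} = \sum_{ij \in E(G)}(w_i+w_j)^2$ together with the Perron relation $\sum_{w \in N(u)} x_w = (q(H)-d(u))\,x$ at the minimum vertex, a routine expansion gives
\[
\bigl(q(H)-q(H-u)\bigr)(1-x^2) \le (q(H)-d(u))\,x^2 + \sum_{w\in N(u)} x_w^2.
\]
Everything then reduces to controlling the sum $\sum_{w\in N(u)} x_w^2$; the crude bound by $1$ would leave a $d(u)$-sized slack that is far too weak.

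The key step is to exploit the defining property of $u$, namely $x_v \ge x$ for every $v \in V(H)$. Each of the $n - 1 - d(u)$ non-neighbours $w \neq u$ then contributes at least $x^2$ to $\sum_{v \ne u} x_v^2 = 1 - x^2$, which yields
\[
\sum_{w \in N(u)} x_w^2 \le 1 - x^2 - (n-1-d(u))\,x^2 = 1 - (n-d(u))\,x^2.
\]
Substituting back, the $d(u)$ terms cancel exactly, leaving the clean estimate
\[
\bigl(q(H)-q(H-u)\bigr)(1-x^2) \le (q(H)-n)\,x^2 + 1.
\]

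For the first conclusion, I will insert $x^2 < (1-\varepsilon)/n$ and use the lower bound $q(H) \ge q(\mathcal{G}_n) = (2\pi(F)+o(1))n$ from (\ref{qn}). The hypothesis $\chi(F) = r+1 \ge 4$ forces $\pi(F) = 1-1/r \ge 2/3$, pushing $q(H)$ past the threshold $6(n-1)/5$ at which the inequality $(q(H)-n)\,x^2 + 1 \le q(H)(1-\varepsilon/6)(1-x^2)/(n-1)$ becomes valid; this yields $q(H-u) \ge q(H)\bigl(1-(1-\varepsilon/6)/(n-1)\bigr)$ for $n$ sufficiently large. For the second conclusion, I will combine this with $q(H) \ge q(\mathcal{G}_n)$ and the gap estimate (\ref{beg}): the difference $q(H-u) - q(\mathcal{G}_{n-1})$ is at least $2\pi(F)\,\varepsilon/6 - 7\sigma - o(1)$, which is positive because the hypothesis $\sigma < \varepsilon/36$ and $\pi(F) \ge 2/3$ together give $7\sigma < 2\pi(F)\,\varepsilon/6$.

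The main obstacle is the bound on $\sum_{w\in N(u)} x_w^2$. Without the minimality refinement one obtains a decrement $q(H) - q(H-u)$ of order $3 - 2\varepsilon$, which exceeds the target $\sim 2(1-\varepsilon/6)$ for every admissible $\varepsilon < 1/2$; the cancellation of the $d(u)$ contribution is what makes the proof go through. It is also precisely this step that requires $r \ge 3$, in line with the remark after Theorem \ref{color} that the conclusion fails when $\chi(F) = 3$.
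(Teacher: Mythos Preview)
Your proposal is correct and follows essentially the same route as the paper's proof: the Rayleigh-quotient lower bound for $q(H-u)$ via the restricted Perron vector, the eigen-equation at $u$, and crucially the same minimality refinement $\sum_{w\in N(u)} x_w^2 \le 1-(n-d(u))x^2$, leading to the identical clean estimate (equivalent to the paper's $q(H-u)\ge q(H)\frac{1-2x^2}{1-x^2}-\frac{1-nx^2}{1-x^2}$). The only cosmetic difference is in the second conclusion: the paper derives a separate sharper bound $q(H-u)\ge q(H)\bigl(1-\tfrac{1}{n-1}\bigr)+\tfrac{\varepsilon}{4}$ and then uses $9\sigma<\varepsilon/4$, whereas you feed the first inequality directly into (\ref{beg}) and close with $7\sigma<2\pi(F)\varepsilon/6$; both arguments go through under $\sigma<\varepsilon/36$ and $\pi(F)\ge 2/3$.
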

\begin{proof}
Firstly, the Rayleigh principle implies that
\begin{align}
q(H) &=\mathbf{x}^\mathrm{T}Q(H)\mathbf{x}=\sum_{ij\in E(H)}(x_{i}+x_{j})^{2}=\sum_{ij\in E(H-u)}(x_{i}+x_{j})^{2}+\sum_{j\in N (u) }(x+x_{j})^{2} \notag \\
&= \sum_{ij\in E(H-u)}(x_{i}+x_{j})^{2}+d(u)x^{2}+2x\sum_{j\in N (u) }x_{j}+\sum_{j\in N (u) }x_{j}^{2} \notag \\
&\leq (1-x^{2})q(H-u)+d(u)x^{2}+2x\sum_{j\in N (u) }x_{j}+\sum_{j\in N (u) }x_{j}^{2}.
\label{e1}  
\end{align}
From the eigenequation  for $q(H)$ at the vertex $u$, we have 
$$(q(H)-d(u))x=\sum_{j\in N (u) }x_{j}.$$
Then we see that
\begin{displaymath}
\begin{split}
d(u)x^{2}+2x\sum_{j\in N (u) }x_{j}+\sum_{j\in N (u) }x_{j}^{2}
&= d(u)x^{2}+2(q(H)-d(u))x^{2}+\sum_{j\in N (u) }x_{j}^{2}\\
&\leq d(u)x^{2}+2(q(H)-d(u))x^{2}+1-(n-d(u))x^{2}\\
&= 2q(H)x^{2}-nx^{2}+1.
\end{split}
\end{displaymath}
Combining this with (\ref{e1}), we find that
\begin{equation}\label{e2}
q(H-u)\geq q(H)\frac{1-2x^{2}}{1-x^{2}}-\frac{1-nx^{2}}{1-x^{2}}.
\end{equation} 
 Note that $\pi(F)=1-1/r \geq2/3$ as $r\geq 3$. 
We get from (\ref{qn}) that for enough large $n$, 
$$q(H)\geq q(\mathcal{G}_{n})\geq(2\pi(F)-1/12)n\geq5n/4.$$
Together with (\ref{e2}) and $x^{2}<(1-\varepsilon)/n$, we get
\begin{align*}
\frac{q(H-u)}{n-2} 
&\geq \frac{q(H)}{n-1}\bigg(1+\frac{1}{n-2}\bigg)\frac{1-2x^{2}}{1-x^{2}}-\frac{1-nx^{2}}{(n-2)(1-x^{2})}\\
&= \frac{q(H)}{n-1}\bigg(1+\frac{1-nx^{2}}{(n-2)(1-x^{2})}\bigg)-\frac{1-nx^{2}}{(n-2)(1-x^{2})}\\
&\geq \frac{q(H)}{n-1}\bigg(1+\frac{1-nx^{2}}{5(n-2)(1-x^{2})}\bigg)+\frac{q(H)}{n-1}\cdot \frac{4(1-nx^{2})}{5(n-2)(1-x^{2})}-\frac{1-nx^{2}}{(n-2)(1-x^{2})}\\
&\geq \frac{q(H)}{n-1}\bigg(1+\frac{1-nx^{2}}{5(n-1)}\bigg)\\
&\geq \frac{q(H)}{n-1}\bigg(1+\frac{\varepsilon}{5(n-1)}\bigg). 
\end{align*} 
Hence, it follows that 
\begin{equation*}
q(H-u)\geq q(H)\bigg(1-\frac{1}{n-1}\bigg)\bigg(1+\frac{\varepsilon}{5(n-1)}\bigg)\geq q(H) 
\left(1- \frac{1-\varepsilon/6}{n-1} \right). 
\end{equation*} 
On the other hand, we have 
\begin{displaymath}
\begin{split}
\frac{q(H-u)}{n-2} 
&\geq \frac{q(H)}{n-1}\bigg(1+\frac{1-nx^{2}}{(n-2)(1-x^{2})}\bigg)-\frac{1-nx^{2}}{(n-2)(1-x^{2})}\\
&\geq \frac{q(H)}{n-1}+\bigg(\frac{5}{4}-1\bigg)\times \frac{(1-nx^{2})}{(n-2)(1-x^{2})}\\
&\geq \frac{q(H)}{n-1}+\frac{\varepsilon}{4(n-2)}.
\end{split} 
\end{displaymath}
For sufficiently large $n$, by (\ref{qn}) and (\ref{beg}), we have $q(\mathcal{G}_{n-1})\leq(2\pi(F)+\sigma)(n-1)$ and $q(H)\geq q(\mathcal{G}_{n})\geq q(\mathcal{G}_{n-1})+2\pi(F)-7\sigma$. Thus, 
\begin{displaymath}
\begin{split}
q(H-u)&\geq q(H)\bigg(1-\frac{1}{n-1}\bigg)+\frac{\varepsilon}{4}\\
&\geq (q(\mathcal{G}_{n-1})+2\pi(F)-7\sigma)\bigg(1-\frac{1}{n-1}\bigg)+\frac{\varepsilon}{4}\\
&\geq q(\mathcal{G}_{n-1})+2\pi(F)-7\sigma-(2\pi(F)+\sigma)-\sigma+\frac{\varepsilon}{4}\\
&= q(\mathcal{G}_{n-1})-9\sigma+\frac{\varepsilon}{4}\\
&> q(\mathcal{G}_{n-1}),
\end{split}
\end{displaymath}
where the last inequality follows from  $\sigma<\varepsilon/36$. 
\end{proof}

\noindent {\bf Fact 1.} If  $0<x< \frac{1}{2}$ and $0<a<1$, then $\ln(1-ax)+ax+x^{2}>0$.

\begin{proof}[Proof of Fact 1]
We denote $f(x):=\ln(1-ax)+ax+x^{2}$. The derivative of $f(x)$ is given as 
 $$f'(x)=-\frac{a}{1-ax}+a+2x=\frac{x(2-a^{2}-2ax)}{1-ax}.$$
Given $0<x<1/2$ and $0<a<1$, we have $f'(x)>0$, which implies that $f(x)$ is strictly increasing for $x\in (0,{1}/{2})$. Since $f(0)=0$, it follows that $f(x)>0$ for all $0<x<1/2$. 
\end{proof}

\noindent {\bf Fact 2.} If  $x>1$, then $\frac{1}{x}<\ln x-\ln(x-1)$ and $\frac{1}{x^{2}}<\frac{1}{x-1}-\frac{1}{x}$.

\begin{proof}[Proof of Fact 2]
First, we observe that $\frac{1}{x}<\ln x-\ln(x-1)$ is equivalent to 
$\ln(1-\frac{1}{x})+\frac{1}{x}<0$ for $x>1$. It is sufficient to show
that $g(t):=\ln(1-t)+t<0$, where $0<t<1$. Since $g'(t)=-\frac{t}{1-t}<0$, we know that $g(t)$ is strictly decreasing. As $g(0)=0$, we get $g(t)<0$. 
The second inequality is straightforward, since 
$\frac{1}{x^{2}}<\frac{1}{x(x-1)}=\frac{1}{x-1}-\frac{1}{x}$ holds for $x>1$. 
\end{proof}

\medskip{Next, we complete the proof Theorem \ref{cri}.}

\begin{proof}[{\bf Proof of Theorem \ref{cri}}]
Let $H$ be an $n$-vertex $F$-free graph
 with $q(H)\geq q(\mathcal{G}_{n})$. 
Our goal is to show that $H\in \mathcal{G}_n$. 
 We may assume that $N$ is sufficiently large to apply Lemmas
 \ref{mind} and \ref{dv} for $n\geq N$, and in addition $N$ is large such that  for $n\geq N$, by (\ref{qn}),
 we have $q(\mathcal{G}_{n})\geq2(1-\varepsilon)\pi(F)n$. Let $n_{0}=(Ne/(1-\varepsilon)\pi(F))^{6/\varepsilon}$. Clearly, we have $n_{0}>N$.  We can construct a sequence of graphs $H=H_{n}$, $H_{n-1},\ldots$, 
 where $n\geq n_{0}$, and $H_{i}$ is a $i$-vertex  $F$-free graph with $q(H_{i})>q(\mathcal{G}_{i})$ for $i<n$.
 To do so, suppose that $\delta(H_{i})\leq(\pi(F)-\varepsilon)i$ and $\mathbf{x}=(x_{1},\ldots,x_{i})$ is a unit eigenvector to $q(H_{i})$, where $i\leq n$. Let $u\in V(H_{i})$ and $x_{u}=\min\{x_{1},\ldots,x_{i}\}$.  Then for $i\geq N$, Lemma
 \ref{mind} implies that  $x_{u}^{2}<(1-\varepsilon)/i$. We set $H_{i-1}=H_{i}-u$. By
 Lemma \ref{dv}, we have
 $$q(H_{i-1})\geq q(H_{i})(1-(1-\varepsilon/6)(i-1)^{-1}),$$
and
$$q(H_{i-1})> q(\mathcal{G}_{i-1}).$$

We claim that this process terminates at some $H_{t}$ with $t>N$. Suppose towards a contradiction that the sequence of graphs reaches $H_{N}$. Then we have
\begin{align*}
q(H_{N})
&\geq q(H_{N+1})(1-(1-\varepsilon/6)N^{-1})\\
&\geq q(H_{n})\prod_{i=N+1}^{n}(1-(1-\varepsilon/6)(i-1)^{-1})\\
&\geq q(H_{n})\exp\bigg(-\sum_{i=N+1}^{n}\big((1-\varepsilon/6)(i-1)^{-1}+(i-1)^{-2}\big)\bigg)\\
&\geq q(H_{n})\exp\big(-(1-\varepsilon/6)\ln(n/(N-1))-1\big)\\
&\geq 2(1-\varepsilon)\pi(F)n\big(\frac{n}{N-1}\big)^{-(1-\varepsilon/6)}e^{-1}\\
&\geq  2(1-\varepsilon)\pi(F)n^{\varepsilon/6}e^{-1}\\
&\geq  2(1-\varepsilon)\pi(F)e^{-1}n_{0}^{\varepsilon/6}\\
&\geq  2N,
\end{align*}
where the third inequality follows from Fact $1$, and the fourth inequality follows from Fact $2$. It is impossible that the signless Laplacian spectral radius of any $N$-vertex graph is  at least $2N$. So this contradiction shows that the process terminates at some $H_{t}$ with $t>N$. By contradiction, it follows that 
$\delta(H_{t})>(\pi(F)-\varepsilon)t,$
so we have $H_{t}\in \mathcal{G}_{t}$, and hence $q(H_{t})\leq q(\mathcal{G}_{t})$. If $t<n$, then by Lemma \ref{dv},  we get $q(H_{t})>q(\mathcal{G}_{t})$, a contradiction. Therefore $t=n$, which implies that  $$\delta(H)=\delta(H_{n})>(\pi(F)-\varepsilon)n.$$
By the definition of $\mathcal{G}_{n}$, we know that  
$H\in \mathcal{G}_{n}$.
\end{proof}

\section{Proofs of Theorems \ref{color} and \ref{thm-1-6}}

A famous result of Andr\'{a}sfai, Erd\H{o}s and S\'{o}s \cite{AES1974} states that if $r\ge 2$ and $G$ is an $n$-vertex $K_{r+1}$-free graph with minimum degree $\delta (G) > \frac{3r-4}{3r-1}n$, then $G$ must be $r$-partite. 
Moreover, Erd\H{o}s and Simonovits \cite{ES1973} proved the following extension for color-critical graphs. 

\begin{lem}[See \cite{ES1973}] 
\label{lem-ES1973}
Let $F$ be a color-critical graph 
with $\chi (F)=r+1\ge 3$.  There is $n_0$ such that 
if $G$ is an $F$-free graph on $n\ge n_0$ vertices with 
 $\delta (G) >  \frac{3r-4}{3r-1}n$, 
 then $G$ is $r$-partite. 
\end{lem}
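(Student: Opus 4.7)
The plan is to combine Simonovits stability with a local embedding argument exploiting the critical edge in $F$, in the spirit of Andr\'{a}sfai--Erd\H{o}s--S\'{o}s. Since $\delta(G)>\frac{3r-4}{3r-1}n$ forces $e(G)\geq \bigl(1-\frac{1}{r}-\frac{1}{r(3r-1)}\bigr)\binom{n}{2}$, the Erd\H{o}s--Simonovits stability theorem (valid for every $F$ with $\chi(F)=r+1$) provides, for any preassigned small $\eta>0$ and sufficiently large $n$, a partition $V(G)=V_1\cup\cdots\cup V_r$ with each $|V_i|$ within $\eta n$ of $n/r$ and with at most $\eta n^2$ edges lying inside the parts. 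Among all such partitions, fix one minimizing the number of inside-edges; a one-vertex swap argument then gives $|N(u)\cap V_j|\geq |N(u)\cap V_i|$ for every $u\in V_i$ and every $j\neq i$.

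Next I would argue by contradiction and suppose some inside-edge $uv$ persists in $V_i$. The color-criticality of $F$ is used as follows: fixing a critical edge $e\in E(F)$ and an $r$-coloring of $F-e$, the two endpoints of $e$ necessarily share a color class, so $F$ embeds into the blow-up $K_r(t,\ldots,t)^{+}$ for any $t\geq |V(F)|$, where the superscript indicates that one extra edge is added inside a single class. It therefore suffices to find, inside $N(u)\cap N(v)$, a balanced complete $(r-1)$-partite subgraph $K_{r-1}(t,\ldots,t)$ with one part in each $V_j$, $j\neq i$; together with the edge $uv$ this yields a copy of $F$, contradicting $F$-freeness. Such a $K_{r-1}(t,\ldots,t)$ is produced by a supersaturation / K\H{o}v\'{a}ri--S\'{o}s--Tur\'{a}n argument inside the nearly-complete $(r-1)$-partite graph on $\bigcup_{j\neq i}\bigl(N(u)\cap N(v)\cap V_j\bigr)$, provided $|N(u)\cap N(v)\cap V_j|=\Omega(n)$ for every $j\neq i$.

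The main obstacle is establishing those common-neighborhood lower bounds at the borderline threshold $\frac{3r-4}{3r-1}$; a naive inclusion-exclusion using only $d_j(u)+d_j(v)>|V_j|$ is too weak on its own. The remedy is to exploit the global bound of $\eta n^2$ on inside-edges: by averaging, all but $O(\sqrt{\eta}\,n)$ vertices $u\in V_i$ are \emph{typical}, in the sense that $d_j(u)\geq(1-\sqrt{\eta})|V_j|$ for every $j\neq i$. Either some bad edge has both endpoints typical, in which case inclusion-exclusion yields $|N(u)\cap N(v)\cap V_j|\geq(1-2\sqrt{\eta})|V_j|=\Omega(n)$ and the embedding goes through; or every bad edge touches an atypical vertex, in which case deleting those few atypical vertices produces a smaller graph still satisfying the minimum-degree hypothesis (for $\eta$ small enough relative to the slack $\delta(G)-\frac{3r-4}{3r-1}n$), and one iterates. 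The careful bookkeeping needed to maintain the degree hypothesis under such deletions is the technical heart of the argument.
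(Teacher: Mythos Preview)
The paper does not prove this lemma; it is quoted from Erd\H{o}s--Simonovits \cite{ES1973} and used as a black box. Your outline has the right shape (stability, then embed $F$ through a critical edge), but two steps do not go through as written. First, the embedding is too coarse: a copy of $K_{r-1}(t,\ldots,t)$ inside $N(u)\cap N(v)$ together with the edge $uv$ yields only $K_r(2,t,\ldots,t)^{+}$, not $K_r(t,\ldots,t)^{+}$. When the color class $C_1$ containing the endpoints of the critical edge has $|C_1|>2$ (as already happens for $F=C_5$, and for many $F$ with $r\ge 3$), the vertices of $C_1\setminus\{a,b\}$ must also be embedded, and their images must be completely joined to your $K_{r-1}(t,\ldots,t)$; your argument supplies no such vertices.

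Second, and more seriously, the deletion/iteration step is circular at the sharp threshold. The stability parameter $\eta$ must be fixed before $n_0$, and in particular before any specific graph $G$ is seen; yet you propose to take $\eta$ ``small relative to the slack $\delta(G)-\tfrac{3r-4}{3r-1}n$''. That slack may equal $1$, while you wish to delete $\Theta(\sqrt{\eta}\,n)$ atypical vertices, after which the minimum-degree hypothesis can genuinely fail on the remaining graph. Even granting that the reduced graph is $r$-partite, this does not make $G$ itself $r$-partite, since an atypical vertex may have neighbours in every part and hence cannot be reinserted. The argument in \cite{ES1973} avoids this trap by working directly at the threshold, without attempting to trade degree slack for vertex deletions.
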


Now, we are ready to prove Theorem \ref{color}. 

\begin{proof}[{\bf Proof of Theorem \ref{color}}]  
Assume that $F$ is a color-critical graph with 
$\chi (F)=r+1\ge 4$. We will apply Theorem \ref{cri}. 
Let $\varepsilon>0$ and $\sigma < \varepsilon/36$ be sufficiently small real numbers. 
Theorem \ref{thm-Sim} implies that
$\mathrm{ex}(n,F)=e(T_{n,r})=\frac{r-1}{2r}n^{2}+O(1)$ and $\pi(F)=\frac{r-1}{r}$.  
Then for sufficiently large $n$, we have 
\begin{displaymath}
\begin{split}
\bigg|\mathrm{ex}(n,F)-\mathrm{ex}(n-1,F)-\pi(F)n\bigg|
=\bigg|\frac{r-1}{2r}n^{2}-\frac{r-1}{2r}(n-1)^{2}-\frac{r-1}{r}n+O(1)\bigg| \leq \sigma n. 
\end{split}
\end{displaymath} 
So the condition (\ref{dl1}) of Theorem \ref{cri} is satisfied. 
Let $\mathcal{G}_n$ be the collection  of all $n$-vertex $F$-free graphs $G$ with minimum degree $\delta (G) > (\frac{r-1}{r} - \varepsilon) n$. Since $\varepsilon >0$ is sufficiently small so that $\frac{r-1}{r} -\varepsilon > \frac{3r-4}{3r-1}$. 
By Lemma \ref{lem-ES1973}, we know that for sufficiently large $n$, each member of $\mathcal{G}_n$ is an $r$-partite graph on $n$ vertices. 
Observe that $T_{n,r}$ is $F$-free and $\delta (T_{n,r})=\lfloor \frac{r-1}{r}n \rfloor $. It follows that $T_{n,r}\in \mathcal{G}_n$.  
 By Theorem \ref{partite}, we have $q(\mathcal{G}_n) =q(T_{n,r})= \frac{2(r-1)}{r}n + o(1)$. 
 Then for sufficiently large $n$, we get  
\begin{displaymath}
\begin{split}
\bigg|q(\mathcal{G}_{n})-4\mathrm{ex}(n,F)n^{-1}\bigg|
=\bigg|q(T_{n,r})-4e(T_{n,r})n^{-1}\bigg|
= o(1)\leq\sigma.
\end{split}
\end{displaymath}
Thus, the condition (\ref{dl2}) of Theorem \ref{cri} is met. Therefore,  Theorem \ref{cri} implies  that for sufficiently large $n$, we have $q(G)\le q(\mathcal{G}_n)= q(T_{n,r})$, and the equality holds if and only if $G=T_{n,r}$. 
\end{proof}

Finally, we are ready to show Theorem \ref{thm-1-6}. 
In what follows, we provide two algebraic proofs of Theorem \ref{thm-1-6}. This is quite different from the combinatorial proof of (\ref{thm-NR-degree}). 

\begin{proof}[{\bf Proof of Theorem \ref{thm-1-6}}]
In our argument, we need to use a lower bound on $q(G)$.
This bound can be found in \cite[Theorem 2.1]{LL2009-Zagreb} or \cite[Lemma 3]{Zhou2010}, which  states that
\begin{equation} \label{eq-signless}
  q (G) \ge \frac{1}{m} \sum_{v\in V(G)} d^2(v),
  \end{equation}
with equality if and only if  $d(u)+d(v)$ are equal for
any $uv \in E(G)$. 
Since $q(T_{n,r}) \le (1- \frac{1}{r})2n$, 
 Theorem \ref{color} implies that if
$G$ is an $F$-free graph on $n$ vertices, then
\begin{equation} \label{eq-q-le}
q(G) \le \left( 1-\frac{1}{r} \right)2n. 
\end{equation}
Combining (\ref{eq-signless}) with (\ref{eq-q-le}),
we obtain
$\sum_{v\in V(G)} d^2(v) \le 2\left(1-\frac{1}{r} \right)mn$, 
as needed. 
\end{proof}

At the end of this paper,
we give another algebraic proof of Theorem \ref{thm-1-6}.

\begin{proof}[{\bf Second proof of Theorem \ref{thm-1-6}}]
The well-known Hofmeistar inequality shows that
\begin{equation*}
  \lambda^2 (G) \ge \frac{1}{n} \sum_{v\in V} d^2(v),
  \end{equation*}
with equality if and only if $G$ is either regular or bipartite semi-regular. 
We may assume that $G$ is the graph achieving the maximum degree power. 
Then $G$ is connected and $n-1 \le m$. 
So $m$ is also sufficiently large. It was shown by Li, Liu and Zhang  \cite{LLZ2025-stability} that if
$F$ is a color-critical graph with $\chi (F)=r+1\ge 4$ and $m$ is sufficiently large,  then for every $m$-edge $F$-free graph $G$, we have 
\begin{equation*}
  \lambda^2(G) \le \left( 1-\frac{1}{r} \right)2m.
  \end{equation*}
Therefore,
it follows that $\sum_{v\in V(G)} d^2(v) \le 2\left(1-\frac{1}{r} \right)mn$, as expected.
\end{proof}

\section{Concluding remarks}
In this paper, we have studied the extremal graph problem for color-critical graphs. 
Firstly, extending a result of He, Jin and Zhang \cite{HJZ2013}, we proved that the $r$-partite Tur\'{a}n graph $T_{n,r}$ is the unique spectral extremal graph that achieves the maximum signless Laplacian spectral radius when we forbid a color-critical graph $F$ with chromatic number $\chi (F)=r+1\ge 4$;  see Theorem \ref{color}.  
This spectral result also extends a classical theorem of Simonovits \cite{S1968}. 
 As an application, we determined the upper bound on the degree power in an $F$-free graph $G$ by showing that $\sum_{v\in V(G)} d^2(v) \le 2(1-\frac{1}{r})mn$; see Theorem \ref{thm-1-6}.  
 This extends a result of Nikiforov and Rousseau \cite{NR2004} and recovers a result of Bollob\'{a}s and Nikiforov \cite{BK2012}. 
 At the end of this paper, we conclude some extremal graph problems concerning the signless Laplacian spectral radius for interested readers.

\subsection{Forbidding complete bipartite graphs plus an edge}

We mention that our result in Theorem \ref{color} holds for any color-critical graph $F$ with chromatic number $\chi (F)\ge 4$. However, 
when we consider the graph $F$ with $\chi (F)=3$, the extremal problem involving the signless Laplacian spectral radius lies beyond the range of our investigation. 
For $2\le s \le t$, 
we denote by $K_{s,t}^+$ the graph obtained from the complete bipartite graph $K_{s,t}$ by embedding an edge into the partite set of size $s$. It is easy to see that $K_{s,t}^+$ is a color-critical graph with $\chi (K_{s,t}^+)=3$. 
Note that every color-critical graph $F$ with $\chi (F)=3$ is a subgraph of $K_{s,t}^+$ for some integers $s,t\ge 2$. 
 Therefore, $K_{s,t}^+$ serves as a natural candidate for our study of color-critical graphs with chromatic number three. 
In particular, Theorem \ref{thm-Sim} implies that if $n$ is sufficiently large and $G$ is an $n$-vertex $K_{s,t}^+$-free graph, then $e(G)\le e(T_{n,2})$, where the equality holds if and only if $G=T_{n,2}$. 
Nevertheless, the spectral extremal problem for $K_{s,t}^+$ is significantly different when we consider the signless Laplacian spectral radius. 
In this case, we propose the following conjecture.  
Let $2\le s \le t$ be positive integers. 
We define the following families: 

\begin{itemize}
\item 
Let $\mathcal{L}_{n,s,t}$ be the family of graphs that are the join of a clique $K_{s-1}$ and a (nearly) $(t-1)$-regular triangle-free graph of order $n-s+1$. 

\item 
Let $\mathcal{Y}_{n,t}$ be the family of graphs that are the join of an independent set $I_{t-1}$ and a  (nearly) $(t-1)$-regular triangle-free graph of order $n-t+1$. 
\end{itemize} 
Clearly, all graphs in both $\mathcal{L}_{n,s,t}$ and $\mathcal{Y}_{n,t}$ are $K_{s,t}^+$-free. 
We point out that 
such a regular graph in the above may not exist, but we can always choose a nearly regular graph. 
It is well-known that 
if $t$ is even and $n$ is odd, then there exist nearly $(t-1)$-regular graphs of order $n$ whose degree sequence is $(t-1,\ldots ,t-1,t-2)$. 
Otherwise, there exist $(t-1)$-regular graphs of order $n$.

\begin{conj}
Let $2\le s\le t$ and $n\ge s+t$.  
If $G$ is an $n$-vertex $K_{s,t}^+$-free graph with the maximal signless Laplacian spectral radius, then 
$G$ is a member of either $\mathcal{L}_{n,s,t}$ or $\mathcal{Y}_{n,t}$.   
\end{conj}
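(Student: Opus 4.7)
The plan is to attack this conjecture by combining a stability argument in the spirit of Lemma \ref{lem-ES1973} with a direct spectral analysis via the Perron eigenvector of $Q(G)$. Since $\chi(K_{s,t}^+)=3$, the Erd\H{o}s--Simonovits stability theorem implies that any $K_{s,t}^+$-free graph $G$ with $q(G)$ close to the extremal value must be structurally close to a bipartite graph. However, the situation is fundamentally different from Theorem \ref{color}: the Tur\'{a}n graph $T_{n,2}$ is not spectrally extremal, because joining a small clique $K_{s-1}$ or a small independent set $I_{t-1}$ to a $(t-1)$-regular triangle-free graph produces a $K_{s,t}^+$-free graph of larger $q$-value (as one verifies using the standard formula for the signless Laplacian spectrum of a join). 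A first task is therefore to compute $q(\mathcal{L}_{n,s,t})$ and $q(\mathcal{Y}_{n,t})$ (at least to leading order) and to compare them as functions of $s,t,n$.

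First I would establish an a priori upper bound of the form $q(G)\le n+O(1)$ for every $n$-vertex $K_{s,t}^+$-free graph, obtained from the Simonovits edge bound $e(G)\le e(T_{n,2})$ combined with standard inequalities relating $q(G)$ to the edge count and the maximum degree. Let $G$ be an extremal graph and $\mathbf{x}$ a unit Perron eigenvector of $Q(G)$; order the entries so that $x_1\ge x_2\ge\cdots\ge x_n$. Using the eigenequation $q(G)x_v=d(v)x_v+\sum_{u\sim v}x_u$ in combination with this upper bound, the vertices realizing the largest entries of $\mathbf{x}$ must have degree close to $n-1$. Let $S$ consist of all vertices of degree exceeding a carefully chosen threshold; arguments in the spirit of those used in the book-free and fan-free literature (e.g., \cite{CJZ2025,WZ2023}) should show that $|S|\le t-1$ and that each vertex of $S$ is adjacent to almost every vertex of $V(G)\setminus S$.

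The core structural step is the dichotomy between the two candidate families. If $S$ contains at least one edge, then avoiding $K_{s,t}^+$ forces $S$ to be a clique of size at most $s-1$ and the induced subgraph on $V(G)\setminus S$ to be triangle-free (any triangle there together with two adjacent vertices of $S$ sharing sufficiently many common neighbors would produce a copy of $K_{s,t}^+$), which points to $\mathcal{L}_{n,s,t}$. If $S$ is independent, then $|S|$ can grow up to $t-1$ and the same avoidance argument again forces the graph on $V\setminus S$ to be triangle-free, pointing to $\mathcal{Y}_{n,t}$. The remaining ``nearly $(t-1)$-regular'' conclusion on $G[V\setminus S]$ should follow from a rotation/exchange argument: if two non-adjacent vertices of $V\setminus S$ had sufficiently different degrees, then moving an edge from the high-degree vertex to the low-degree one would preserve $K_{s,t}^+$-freeness and strictly increase $q(G)$, contradicting extremality.

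The main obstacle I anticipate is ruling out \emph{hybrid} configurations in which $S$ is neither a full clique nor independent, such as a near-clique missing a few edges or a set containing a bounded matching. Such configurations remain $K_{s,t}^+$-free in some parameter ranges, and their signless Laplacian spectral radii may be dangerously close to those of the two candidate families; settling them requires precise comparisons of the top eigenvalues of $Q$ via the eigenvector equation, together with the monotonicity principle that regularizing a nearly-regular triangle-free graph strictly increases $q$. A secondary difficulty is that the conjecture asserts the extremal graph lies in $\mathcal{L}_{n,s,t}$ \emph{or} $\mathcal{Y}_{n,t}$ without specifying which: since the ``winner'' depends delicately on $s,t$, and even on whether a genuine $(t-1)$-regular triangle-free graph of the relevant order exists, the final comparison between $q(\mathcal{L}_{n,s,t})$ and $q(\mathcal{Y}_{n,t})$ must be carried out case by case rather than resolved a priori.
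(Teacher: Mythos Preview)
The statement you are addressing is labeled a \emph{Conjecture} in the paper, and the paper does not supply a proof. The authors present it as open: they remark that the case $s=2$ was settled by Chen, Jin and Zhang \cite{CJZ2025} (the extremal graphs lie in $\mathcal{Y}_{n,t}$), and they only \emph{speculate} that for $s\ge 3$ and large $n$ the extremal graphs lie in $\mathcal{L}_{n,s,t}$. There is therefore no ``paper's own proof'' for your proposal to be compared against.

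Your proposal is a plausible outline of an attack, and indeed the structure you describe (locate a small set $S$ of near-universal vertices via the Perron eigenvector, then argue a dichotomy on $G[S]$) is the template used in \cite{CJZ2025,WZ2023} for the book and fan cases. But what you have written is explicitly a plan rather than a proof, and the obstacles you flag are genuine. Two further gaps you should be aware of:
\begin{itemize}
\item The conjecture is stated for \emph{all} $n\ge s+t$, whereas every step of your plan (the Erd\H{o}s--Simonovits stability input, the ``near-universal'' degree estimates, the rotation argument) presupposes $n$ large relative to $s,t$. Even a successful execution of your strategy would yield only a large-$n$ version of the conjecture.
\item The assertion that an edge inside $S$ forces $S$ to be a full clique of size at most $s-1$ is not automatic: $K_{s,t}^+$-freeness only forbids an edge inside a set that has $t$ \emph{common} neighbours, so a near-universal set $S$ could in principle carry a proper subgraph that is neither a clique nor edgeless while still avoiding $K_{s,t}^+$. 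This is exactly the ``hybrid'' obstruction you mention, and it is the heart of the problem; in the known $s=2$ case it collapses because $K_{s-1}=K_1$ is trivial.
\end{itemize}
In short: there is nothing to compare, and your proposal is a reasonable research plan for the large-$n$ regime but not a proof of the conjecture as stated.
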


In the case $s=2$, the graph $K_{s,t}^+$ reduces to the classical book graph. 
It is easy to verify that  $q(\mathcal{L}_{n,2,t}) < q(\mathcal{Y}_{n,t})$ for large $n$, 
where $q(\mathcal{L}_{n,2,t}):=\max\{q(G):G\in \mathcal{L}_{n,2,t}\}$. 
This case was recently studied by Chen, Jin and Zhang \cite{CJZ2025}, who proved that the $Q$-spectral extremal graphs\footnote{ For the sake of simplicity, we say that a graph $G$ is a {\it $Q$-spectral extremal graph} of $F$ if $G$ is an $F$-free graph and $G$ achieves the maximal signless Laplacian spectral radius.} of $K_{2,t}^+$ lie in $\mathcal{Y}_{n,t}$.  
We mention that the $Q$-spectral extremal graphs are quite different when we forbid  $K_{2,t}$ or $K_{2,t}^+$ as a subgraph. 
It was shown in \cite{AFNP2016} that the $Q$-spectral extremal graphs of $K_{2,t}$ are in $\mathcal{L}_{n,2,t}$.

In the case $s \ge 3$, the phenomenon may be different from the case $s=2$. 
For fixed $s\ge 3$, if $n$ is sufficiently large, then we 
can compute that  $q(\mathcal{L}_{n,s,t}) > q(\mathcal{Y}_{n,t})$.  
Based on this observation, we venture to speculate that for every $3\le s\le t$ and sufficiently large $n$, the $Q$-spectral extremal graphs are samely located in $\mathcal{L}_{n,s,t}$ when excluding either $K_{s,t}$ or $K_{s,t}^+$ as a subgraph.

\subsection{Forbidding color-$k$-critical graphs} 

Recall that an {\it induced matching} of size $k$ in a graph $F$ is a set of $k$ edges such that no two of them intersect in a common vertex or are joined by an edge of $F$; that is, an induced matching is a matching that forms an induced subgraph. 
For an integer $k\ge 1$, a graph $F$ is called {\it color-$k$-critical} if 
there exists an induced matching\footnote{We would like to emphasize that the $k$ edges are required to form an induced matching in  $F$. However, most references in the literature have overlooked this requirement.} of size $k$ whose deletion decreases its chromatic number, and deleting any $k-1$ vertices of $F$ does not decrease its chromatic number.  
It is worth highlighting that this family includes a wide range of graphs, such as color-critical graphs, the Petersen graph, Kneser graphs $KG(n,2)$, and disjoint unions of cliques. 

We denote $H_{n,r,k}:=K_{k-1}\vee T_{n-k+1,r}$, 
which is the graph obtained by joining each vertex of the complete graph $K_{k-1}$ to each vertex of the $r$-partite Tur\'{a}n graph $T_{n-k+1,r}$. 
In 1974, Simonovits \cite{Sim1974} determined the unique extremal graph for color-$k$-critical graphs.

\begin{theorem}[Simonovits \cite{Sim1974}]
    Let $k\ge 1,r\ge 2$ and $F$ be a color-$k$-critical graph with $\chi (F)=r+1$. If $n$ is sufficiently large and $G$ is an $n$-vertex $F$-free graph, then $e(G)\le e(H_{n,r,k})$, with equality if and only if $G=H_{n,r,k}$. 
\end{theorem}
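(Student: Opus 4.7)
The plan is a stability-plus-structure argument in the spirit of Simonovits's 1968 theorem (Theorem \ref{thm-Sim}), adapted to handle the induced-matching condition of color-$k$-criticality.

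\textbf{Setup.} First I would verify that $H_{n,r,k}$ is $F$-free: for any would-be embedding of $F$ into $H_{n,r,k}$, let $A\subseteq V(F)$ be the preimage of the universal clique $K_{k-1}$, so $|A|\le k-1$; then $F-A$ embeds into the $r$-chromatic Tur\'an graph $T_{n-k+1,r}$, giving $\chi(F-A)\le r$ and contradicting condition (b) of color-$k$-criticality. A direct count yields $e(H_{n,r,k}) = \binom{k-1}{2}+(k-1)(n-k+1)+e(T_{n-k+1,r})$, which exceeds $e(T_{n,r})$ by $(k-1)n+O(1)$.

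\textbf{Stability and structure.} Let $G$ be an $n$-vertex $F$-free graph with $e(G)\ge e(H_{n,r,k})$. By Theorem \ref{thm-Sim} combined with the Erd\H{o}s--Stone--Simonovits theorem, $\pi(F)=(r-1)/r$. Applying the Erd\H{o}s--Simonovits stability theorem, for any $\eta>0$ and $n$ large, $G$ admits a partition $V(G)=V_1\cup\cdots\cup V_r$ with $|V_i|=n/r+o(n)$ whose total defect (in-part edges plus missing cross-part edges) is at most $\eta n^2$. I would call a vertex \emph{atypical} if it meets more than $\eta^{1/2}n$ defect edges; the atypical set $S$ has size $O(\eta^{1/2}n)$, and every typical vertex in $V_j$ is adjacent to all but $o(n)$ vertices of each $V_{j'}$, $j'\ne j$.

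\textbf{Induced-matching embedding.} The central claim is that the in-part edges of $G$ incident to $V\setminus S$ span at most $2(k-1)$ vertices. Suppose not: then they contain a matching of size $k$ with typical endpoints, and since each typical vertex has only $o(n)$ cross-part non-neighbors, a greedy pruning refines this to an \emph{induced} matching $\{a_1b_1,\dots,a_kb_k\}$ of $k$ in-part edges in $G$. Fix an induced matching $M=\{u_1v_1,\dots,u_kv_k\}$ of $F$ witnessing condition (a) together with a proper $r$-coloring $\phi$ of $F-M$; permuting parts and pigeonholing on which in-part edges fall into which color class, I assume that $a_ib_i\in V_{\phi(u_i)}$ whenever $\phi(u_i)=\phi(v_i)$ and that $a_i\in V_{\phi(u_i)}, b_i\in V_{\phi(v_i)}$ otherwise. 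Then I greedily embed each remaining vertex $w\in V(F)\setminus V(M)$ into $V_{\phi(w)}\setminus S$; this succeeds because $|V(F)|$ is a fixed constant while every typical vertex has all but $o(n)$ neighbors in each other part, leaving enough room to realize every prescribed $F$-edge. The resulting embedding places $F$ in $G$, contradicting $F$-freeness.

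\textbf{Finishing and main obstacle.} With the claim established, a further symmetrization — cloning each endpoint of an in-part edge into a typical vertex of the correct part whenever this does not create an $F$-copy — forces $G$ to contain a set $K$ of exactly $k-1$ universal vertices, with $G-K$ an $F$-free graph on $n-k+1$ vertices. The edge inequality $e(G)\ge e(H_{n,r,k})$ then forces $e(G-K)\ge e(T_{n-k+1,r})$, and combining the Erd\H{o}s--Stone--Simonovits theorem with the standard Tur\'an-type uniqueness argument for $G-K$ yields $G-K=T_{n-k+1,r}$, hence $G=H_{n,r,k}$. The main obstacle is the induced-matching embedding: color-$k$-criticality requires the distinguished matching of $F$ to be \emph{induced}, so in $G$ one must locate $k$ in-part edges that are simultaneously pairwise disjoint, carry no cross-edges between their endpoints, and lie in the color classes prescribed by $\phi$. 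Reconciling these constraints with the stability partition — rather than merely producing $k$ disjoint in-part edges — is the essential technical hurdle that distinguishes $k\ge 2$ from Simonovits's original $k=1$ theorem.
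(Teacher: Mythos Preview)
The paper does not prove this theorem: it is quoted in the concluding remarks (Section~5.2) as a known result of Simonovits~\cite{Sim1974}, with no argument supplied. There is therefore no in-paper proof to compare your proposal against.

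As for your sketch itself, the overall stability-then-cleanup shape is standard and plausible, but two steps are genuinely underspecified. First, the embedding step hides a real obstruction: after fixing an $r$-coloring $\phi$ of $F-M$, you need an in-part edge in the part $V_{\phi(u_i)}$ for each $i$ with $\phi(u_i)=\phi(v_i)$, and these color classes need not all coincide. Having many typical in-part edges in $G$ does not by itself place them in the \emph{specific} parts that $\phi$ (even after a global permutation) demands, particularly when $F$ is connected and the coloring cannot be permuted independently on components. Your phrase ``permuting parts and pigeonholing on which in-part edges fall into which color class'' is exactly where the work lies, and it is not a pigeonhole. (Separately, you do not need the in-part edges to form an \emph{induced} matching in $G$: you are embedding $F$ as a subgraph, so extra $G$-edges among the images of $V(M)$ are harmless; the ``induced'' requirement pertains to $M$ inside $F$, not to its image in $G$.) Second, the passage from ``in-part edges span at most $2(k-1)$ vertices'' to ``$G$ contains a set $K$ of exactly $k-1$ universal vertices'' via cloning is asserted rather than argued; this is where the hypothesis $e(G)\ge e(H_{n,r,k})$ and condition~(b) of color-$k$-criticality must be used together, and the step is not automatic.
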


Motivated by this result, we propose the following conjecture. 

\begin{conj} \label{conj-2}
    Let $k\ge 2$ and $F$ be a color-$k$-critical graph with $\chi (F)= r+1\ge 4$. If $n$ is sufficiently large and $G$ is an $n$-vertex $F$-free graph, then 
    $q(G)\le q(H_{n,r,k})$, 
    with equality if and only if $G=H_{n,r,k}$.  
\end{conj}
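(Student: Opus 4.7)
The plan is to extend the strategy of Theorem \ref{color} to handle the richer extremal structure $H_{n,r,k} = K_{k-1} \vee T_{n-k+1,r}$. The overall scheme is threefold: (i) generalize the criterion of Theorem \ref{cri} to accommodate extremal families where the gap between $q(\mathcal{G}_n)$ and $4\,\mathrm{ex}(n,F)/n$ is a nonzero $O(1)$ rather than $o(1)$; (ii) establish a degree-stability lemma asserting that an $F$-free graph with minimum degree above $(1-\frac{1}{r}-\varepsilon)n$ must be a subgraph of $H_{n,r,k}$; and (iii) assemble these ingredients together with Simonovits's edge-extremal theorem \cite{Sim1974}.

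For step (i), observe that the proof of Theorem \ref{cri} uses the hypotheses (\ref{dl1}) and (\ref{dl2}) only to derive the two asymptotics (\ref{qn}) and (\ref{beg}). A direct computation with $H_{n,r,k}$ yields $q(H_{n,r,k}) = 2\pi(F)n + O(1)$ and $q(H_{n,r,k}) - q(H_{n-1,r,k}) = 2\pi(F) + o(1)$, which are exactly the asymptotic inputs that Lemmas \ref{mind} and \ref{dv} require; the vertex-deletion argument then proceeds with $\sigma$ replaced by an $n$-dependent $\sigma_n = o(1)$, forcing the $F$-free maximizer $G$ to satisfy $\delta(G) > (1 - \frac{1}{r} - \varepsilon)n$. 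For step (ii), I would attempt an induction on $k$: in such a $G$, take a vertex $v$ of nearly maximum degree and observe that the neighborhood $G[N(v)]$ must be $F'$-free for an appropriate color-$(k-1)$-critical graph $F'$ obtained by a controlled deletion from $F$; the inductive hypothesis applied to $G[N(v)]$, followed by a cleaning step, would show that the $k-1$ vertices of largest degree in $G$ form a clique adjacent to every other vertex, while the remainder of $G$ is a balanced $r$-partite graph, whence $G \subseteq H_{n,r,k}$. Combining (i) and (ii), together with the strict monotonicity of $q$ under edge addition within the $F$-free class, yields both the upper bound $q(G) \le q(H_{n,r,k})$ and the uniqueness of the extremal graph.

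The principal obstacle is step (ii). Unlike the color-critical case, in which Lemma \ref{lem-ES1973} directly gives $r$-partiteness, the color-$k$-critical analog requires careful treatment of the \emph{induced matching} condition flagged in the paper's footnote: deleting a vertex incident to an edge of the color-reducing matching need not produce a color-$(k-1)$-critical residue, so the induction cannot proceed naively. Promising remedies include tracking the induced matching explicitly through each inductive step, or combining the edge-stability form of Simonovits's theorem with a local symmetrization argument that drives any near-extremal $F$-free configuration into the class of subgraphs of $H_{n,r,k}$. A secondary issue, requiring only more careful bookkeeping, is verifying that the chain of inequalities in Lemma \ref{dv} survives when the implicit constant in $q(\mathcal{G}_n) - q(\mathcal{G}_{n-1}) - 2\pi(F)$ is $o(1)$ rather than uniformly bounded by the small constant $\sigma$.
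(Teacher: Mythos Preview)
The paper does not prove this statement: Conjecture~\ref{conj-2} is left open. Indeed, immediately after stating it the authors write that ``our proof of Theorem~\ref{color} cannot be extended to the setting of Conjecture~\ref{conj-2}, since the $F$-free graphs do not have the degree-stable property. There is no related result corresponding to Lemma~\ref{lem-ES1973} whenever $F$ is a color-$k$-critical graph with $k\ge 2$.'' So there is no proof in the paper for you to match; what you have written is a research outline for an open problem, and the authors have already flagged precisely the obstacle you call step~(ii).

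Your step~(i) is reasonable and essentially orthogonal to the difficulty: the asymptotics $q(H_{n,r,k})=2\pi(F)n+O(1)$ and $q(H_{n,r,k})-q(H_{n-1,r,k})=2\pi(F)+o(1)$ do feed back into Lemmas~\ref{mind} and~\ref{dv} with only cosmetic changes. The genuine gap is in step~(ii), and your own diagnosis understates it. The inductive mechanism ``$G[N(v)]$ is $F'$-free for an appropriate color-$(k-1)$-critical $F'$'' does not hold in general: being $F$-free places no usable constraint on $G[N(v)]$ unless $F$ has a dominating vertex, which a generic color-$k$-critical graph need not have, and even when some vertex deletion of $F$ drops the chromatic number, the resulting graph is typically not color-$(k-1)$-critical (the second clause of the definition---no set of $k-2$ vertices lowers $\chi$---can fail outright). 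More fundamentally, the conclusion you are aiming for in step~(ii) is false as stated: an $F$-free graph with $\delta(G)>(1-\tfrac{1}{r}-\varepsilon)n$ need \emph{not} be a subgraph of $H_{n,r,k}$. For instance, when $F=2K_{r+1}$ one can take $G=T_{n,r}$ with a single extra edge inside one part; this is $F$-free with $\delta(G)\ge (1-\tfrac{1}{r})n-1$, yet it is not contained in any $H_{n,r,2}=K_1\vee T_{n-1,r}$. This is exactly why the paper says the degree-stable property fails for $k\ge 2$, and why neither the symmetrization remedy nor the ``track the induced matching'' remedy you suggest can rescue a statement that is simply not true. Any proof of Conjecture~\ref{conj-2} will have to bypass degree-stability rather than repair it.
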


The case $k=1$ in Conjecture \ref{conj-2} reduces to Theorem \ref{color}. 
In the case $k\ge 2$, 
we would like to point out that our proof of Theorem \ref{color} cannot be extended to the setting of Conjecture \ref{conj-2},  since the $F$-free graphs do not have the degree-stable property. There is no related result corresponding to Lemma \ref{lem-ES1973} whenever $F$ is a color-$k$-critical graph with $k\ge 2$. 
In particular, 
the case $F=2K_3$ was recently studied by Zhang and Wang \cite{ZW2024}.

\end{document}